\newcommand*\mybackmatter{%
	\startcontents
	\phantomsection
	\addcontentsline{toc}{part}{}%
	\@endpart}
\definecolor{tiffanyblue}{rgb}{0.3, 0.7, 0.9}
\definecolor{prussianblue}{rgb}{0.0, 0.19, 0.33}
\definecolor{rosewood}{rgb}{0.4, 0.0, 0.04}
\definecolor{tealblue}{rgb}{0.21, 0.46, 0.53}
\definecolor{frenchblue}{rgb}{0.0, 0.45, 0.73}
\definecolor{oceanboatblue}{rgb}{0.0, 0.47, 0.75}
\definecolor{coolblack}{rgb}{0.0, 0.18, 0.39}
\definecolor{RedViolet}{rgb}{0.78, 0.08, 0.52}
	\definecolor{richmaroon}{rgb}{0.69, 0.19, 0.38}
		\definecolor{rose}{rgb}{1.0, 0.0, 0.5}
			\definecolor{ruby}{rgb}{0.88, 0.07, 0.37}
			\definecolor{vividcerise}{rgb}{0.85, 0.11, 0.51}
\titleclass{\part}{top}
\newcommand{\RomanNumeralCaps}[1]
{\MakeUppercase{\romannumeral #1}}
\newtheorem{conjecture}{Conjecture}
\newtheorem{definition}{Definition}
\newtheorem{problem}{Problem} 
\newtheorem{theorem}{Theorem}
\newtheorem{lemma}{Lemma}
\newtheorem{proposition}{Proposition}
\newtheorem{corollary}{Corollary}
\newtheorem{claim}{Claim}[theorem]
\newtheorem*{claim*}{Claim}
\begin{document}
	\title{Remarks on the subdivisions of bispindles and two-blocks cycles in highly chromatic digraphs}

\maketitle
\begin{center}\author{Darine AL MNINY \footnote[1]{Camille Jordan Institute, Claude Bernard University - Lyon 1, France. (mniny@math.univ-lyon1.fr)}$^{,}$ \footnote[2]{KALMA Laboratory, Department of Mathematics, Faculty of Sciences I, Lebanese University, Beirut - Lebanon. (darine.mniny@liu.edu.lb)},   Salman GHAZAL \footnote[3]{ Department of Mathematics, Faculty of Sciences I, Lebanese University, Beirut - Lebanon. (salman.ghazal@ul.edu.lb)}$^{,}$ \footnote[4]{ Department of Mathematics and Physics, School of Arts and Sciences, Beirut International University, Beirut - Lebanon. (salman.ghazal@liu.edu.lb)}} \end{center}

\begin{abstract}
	\noindent A  $(2+1)$-bispindle $B(k_1,k_2;k_3)$ is the union of two  $xy$-dipaths of respective lengths $k_1$ and $k_2$, and one $yx$-dipath of length $k_3$, all these dipaths being pairwise internally disjoint. Recently, Cohen et al. conjectured that, for every positive integers $k_1, k_2, k_3$, there is an integer $g(k_1, k_2, k_3)$ such that  every strongly connected digraph not containing  subdivisions of $B(k_1, k_2; k_3)$ has a chromatic number at most $g(k_1, k_2, k_3)$, and they proved it only for the case where $k_2=1$. For Hamiltonian digraphs, we prove Cohen et al.'s conjecture, namely $g(k_1, k_2, k_3)\leq 4k$, where $k=max\{k_1, k_2, k_3\}$. A   two-blocks cycle $C(k_1,k_2)$ is the union of two internally disjoint  $xy$-dipaths of length $k_1$ and $k_2$ respectively.  Addario et al. asked if the chromatic number of strong digraphs not containing subdivisions of a two-blocks cycle $C(k_1,k_2)$ can be bounded from above  by $O(k_1+k_2)$, which remains an open problem.  Assuming that $k=max\{k_1,k_2\}$, the best reached upper bound, found  by Kim et al., is $12k^2$. In this article, we conjecture that this bound can be slightly improved to $4k^2$ and we confirm our conjecture for some particular cases. Moreover, we provide a positive answer to Addario et al.'s question for the class of digraphs having a Hamiltonian directed path. 
	\end{abstract}
\section{Introduction}
\bigskip
Throughout this paper, all graphs  are considered to be \textit{simple}, that is, there are no loops and no multiple edges. By giving an orientation to each edge of a graph $G$, the obtained oriented graph is called a \textit{digraph}. Reciprocally, the graph obtained from a digraph $D$ by ignoring the directions of its arcs is called the \textit{underlying graph} of $D$, and denoted by $G(D)$.
The \textit{chromatic number} of a digraph $D$, denoted by $\chi(D)$, is the chromatic number of its underlying graph. Let $\cal{D}$ be a class of digraphs. The chromatic number of $\cal{D}$, denoted by $\chi(\cal D$$)$, is the smallest
integer $k$ such that $\chi(\cal D$$) \leq k$ for all $D \in \cal D$ or $+\infty$ if no such $k$ exists. By convention, if $\cal{D}$ $ = \emptyset$, 
then $\chi(\cal D$$)=0$. If $\chi(\cal D$$) \neq +\infty$, we say that $\cal{D}$ has a bounded chromatic number. \medbreak

\noindent  A \textit{directed path}, or simply a \textit{dipath}, is an oriented path where all the arcs are oriented in the same direction from the initial vertex towards the terminal vertex.  A classical result due to Gallai and Roy  \cite{Gallai, Roy} is the following: 
 \begin{theorem}\label{directedpath}(Gallai, 1968; Roy, 1967) Let   $k$ be a non-negative integer and $D$ be a digraph whose chromatic number is at least $k$. Then $D$ contains a directed path of order $k$.\end{theorem}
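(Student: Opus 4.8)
The plan is to prove the equivalent quantitative statement that if the longest directed path in $D$ has $\ell$ vertices, then $\chi(D)\le \ell$. This immediately yields the theorem: if $\chi(D)\ge k$, then $\ell\ge k$, so $D$ contains a directed path on $\ell\ge k$ vertices and hence a directed subpath of order exactly $k$. The main tool will be to exhibit an explicit proper colouring of the underlying graph $G(D)$ with at most $\ell$ colours, reading the colour of a vertex off the length of a longest directed path terminating at it.

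First I would pass to a well-chosen acyclic subdigraph. Let $H$ be a spanning subdigraph of $D$ with arc set contained in that of $D$, chosen acyclic and maximal with respect to arc inclusion among acyclic subdigraphs. Maximality means that for every arc $(u,v)$ of $D$ not in $H$, the digraph $H+(u,v)$ contains a directed cycle; since such a cycle must use $(u,v)$, this is equivalent to saying that $H$ already contains a directed path from $v$ to $u$. This reformulation is exactly what I will exploit for the arcs discarded from $H$.

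Next, for each vertex $v$ define $f(v)$ to be the number of vertices on a longest directed path of $H$ ending at $v$, so that $f(v)\ge 1$. I would then check that $f$ properly colours $G(D)$. The key observation is the strict monotonicity of $f$ along arcs of $H$: if $(u,v)\in A(H)$, prepending a longest path of $H$ ending at $u$ gives a longer path ending at $v$, so $f(v)\ge f(u)+1$; consequently $f$ strictly increases along every directed path of $H$. For an arc $(u,v)$ of $H$ this gives $f(u)\ne f(v)$ at once. For an arc $(u,v)$ of $D$ not in $H$, maximality supplies a directed $v$-to-$u$ path in $H$, along which $f$ strictly increases, so $f(u)>f(v)$ and again $f(u)\ne f(v)$. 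Because $D$ is an orientation of a simple graph, each edge of $G(D)$ corresponds to exactly one arc, so these two cases show $f$ assigns distinct colours to adjacent vertices.

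Finally I would count colours: the values of $f$ lie in $\{1,\dots,\ell'\}$, where $\ell'$ is the number of vertices on a longest directed path of $H$, and clearly $\ell'\le \ell$. Hence $\chi(D)=\chi(G(D))\le \ell'\le \ell$, as required. The one point I expect to require care is the strictness in the second case: the argument needs the $v$-to-$u$ path in $H$ to be nonempty. This is guaranteed because a directed cycle through $(u,v)$ has length at least two (there are no loops), so deleting the arc $(u,v)$ leaves a genuine $v$-to-$u$ path of length at least one, forcing $f(u)\ge f(v)+1$.
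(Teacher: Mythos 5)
The paper states this result as the classical Gallai--Roy theorem and cites Gallai (1968) and Roy (1967) without reproducing a proof, so there is no internal proof to compare against; your argument is the standard proof of Gallai--Roy (a maximal spanning acyclic subdigraph $H$, colouring each vertex by the order of a longest directed path of $H$ ending there) and it is correct, including the often-overlooked strictness point for arcs outside $H$. The only step you leave implicit is that appending an arc $(u,v)\in A(H)$ to a longest path of $H$ ending at $u$ again yields a \emph{path}, i.e.\ that $v$ does not already lie on it --- but this follows at once from the acyclicity of $H$, since otherwise the segment from $v$ to $u$ together with $(u,v)$ would close a directed cycle.
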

\noindent This raises the following question:
\begin{problem}\label{problem1} Which are the  digraph classes  $\cal{D}$ such that every digraph with  chromatic number at least $k$ contains an element of  $\cal{D}$ as a subdigraph?\end{problem}

 \noindent  Denoting by $Forb( \cal{D}$$)$ the class of digraphs that do not contain an element of a class of digraphs $\cal{D}$ as a subdigraph, the above question can be restated in terms of $Forb(\cal{D}$$)$ as follows: \textit{Which are the digraph classes $\cal{D}$ such that $\chi(Forb( \cal{D}$$))<+\infty$?} Due  to a famous theorem of Erd\"{o}s \cite{girth} which guarantees the existence  of graphs  with  arbitrarily  high  girth  and arbitrarily high  chromatic number,  if $H$ is a digraph containing an  oriented    cycle,  there  exist  digraphs  with  arbitrarily  high chromatic number with no subdigraph isomorphic to $H$. This means that the only possible candidates to  generalize  Theorem  \ref{directedpath}  are  the oriented  trees.  In this context,  Burr \cite{burr}  proved that the chromatic number of $Forb(T)$  for every oriented tree $T$ of order $k$  is at most $k^2-2k$ and he conjectured that this upper bound can be improved to $2k-3$: 
 
 \begin{conjecture}(Burr, 1980)\label{burrconj}
 	Let $k$ be a positive integer and let $T$ be an oriented tree of order $k$. Then the chromatic number of $Forb(T)$ is equal to $2k-3$. \end{conjecture}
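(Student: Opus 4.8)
The plan is to attack Burr's conjecture in its two natural halves: the upper bound $\chi(Forb(T)) \leq 2k-3$ for \emph{every} oriented tree $T$ of order $k$, and a matching lower bound exhibiting a tree and a digraph for which the value $2k-3$ is actually attained. I should say at the outset that this is a long-standing open problem and I do not expect to close it; what follows is the strategy I would pursue and an honest account of where it stalls. The upper bound is where essentially all the difficulty lies, so I would invest the effort there, and I would treat the lower bound as the cleaner, constructive half that certifies the target constant is correct.

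For the upper bound I would argue by embedding $T$ into a digraph $D$ with $\chi(D) \geq 2k-2$. The tool is the stratification underlying Theorem \ref{directedpath}: fixing a maximal acyclic spanning subdigraph of $D$ and colouring each vertex by the order of the longest directed path ending at it yields a proper colouring with $\chi(D)$ levels, and every arc of $D$ moves strictly between levels. I would root $T$ at a leaf and list its vertices in a tree-search order $v_1,\dots,v_k$ so that each $v_i$ with $i\geq 2$ has a unique already-embedded neighbour, its parent. Embedding $v_i$ then reduces to finding, from the image of its parent, an unused out-neighbour or in-neighbour (according to the orientation of the parent arc) sitting at an admissible level; large chromatic number is meant to guarantee that such candidates always survive. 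The heuristic reason the budget $2k-3$ should suffice is that each of the $k-1$ arcs of $T$ forces at most one unit of level movement, and the movement can go up or down, giving roughly $2(k-1)$ levels before one shaves a constant.

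The lower bound I would handle separately and explicitly. Note first that the transitive tournament is useless here, since it already contains every acyclic digraph on at most its own order and hence every oriented tree on $k\leq 2k-3$ vertices; the extremal examples must therefore be genuinely cyclic. For a suitable orientation of $T$ whose longest directed subpath is short — an ``antidirected'' or otherwise balanced orientation of a path or caterpillar — I would pair it with a circular or blow-up construction whose chromatic number is forced up to exactly $2k-3$ while the short-dipath obstruction blocks every potential copy of $T$. The technical content of this half is verifying that the construction omits $T$ as a subdigraph and computing its chromatic number precisely.

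The main obstacle is entirely in the upper bound, and it is exactly what has kept the conjecture open for four decades: the greedy embedding above is not lossless. Distinct branches of $T$ compete for the same vertices and the same levels of $D$, and controlling these conflicts with only the linear budget $2k-3$ — rather than the quadratic $k^2-2k$ that Burr's crude analysis yields — is the crux. To close the gap I would abandon the purely greedy scheme in favour of a more global argument: either use a long directed path supplied by Theorem \ref{directedpath} as a scaffold and embed the forward and backward arcs of $T$ simultaneously along it, or induct on $k$ by deleting a leaf and quantifying precisely how much chromatic number the reattachment of a single arc can cost. It is this linear-versus-quadratic control that present techniques cannot yet deliver in full generality, and I would expect any genuine progress to come from a sharper such accounting rather than from refining the embedding order.
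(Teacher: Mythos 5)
You have not proved the statement, and you say so yourself --- but it is worth being explicit that there is nothing in the paper to measure your attempt against either: the statement is Burr's conjecture from 1980, which the paper records as open and does not attempt to prove. The paper's surrounding text gives the actual state of the art, namely Burr's original bound $k^2-2k$ and the Addario-Berry et al.\ improvement to $k^2/2 - k/2$, both obtained by precisely the kind of level/greedy embedding you sketch for your upper-bound half. Your self-diagnosis of where that scheme stalls is accurate: the Gallai--Roy stratification plus leaf-by-leaf embedding inherently loses a factor because distinct branches of $T$ compete for levels and vertices, and that loss is exactly what produces quadratic rather than linear bounds. So the ``genuine gap'' is the conjecture itself; your proposal identifies the crux correctly but, as you concede, does not supply the missing idea.

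Two smaller points on your lower-bound half. First, your observation that the transitive tournament is useless and the extremal examples must be cyclic is correct, and the standard witness is concrete: a regular tournament on $2k-3$ vertices has all out-degrees equal to $k-2$, hence omits the out-star of order $k$ while having chromatic number $2k-3$; you do not need an elaborate circular blow-up construction, so that half of your plan is essentially already settled in the literature. Second, be careful with the statement as printed: read literally, ``the chromatic number of $Forb(T)$ is \emph{equal to} $2k-3$ for every oriented tree $T$ of order $k$'' is false, since by Theorem \ref{directedpath} (Gallai--Roy) the directed path of order $k$ satisfies $\chi(Forb(P)) = k-1 < 2k-3$ for $k \geq 3$. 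The correct reading, which your plan implicitly adopts by choosing ``a suitable orientation of $T$,'' is that $2k-3$ is an upper bound valid for all trees of order $k$ and is attained by the worst such trees. Any serious attack should state the conjecture in that form before attempting the upper bound, which remains the only genuinely open part.
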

 
 \noindent The best known upper bound, found by   Addario-Berry et al. \cite{tree}, is  $k^2/2 - k/2$.  However, for oriented paths with two blocks (\textit{blocks} are maximal directed subpaths), the best possible upper bound is known. Assuming that an oriented path $P$ has two blocks of lengths $k_1$ and $k_2$ respectively, we say that $P$ is a two-blocks path and we write $P=P(k_1,k_2)$.
\begin{theorem}(Addario-Berry et al. \cite{2path}, 2007)\label{path} Let $k_{1}$ and $k_{2}$ be positive integers such that $k_{1}+k_{2} \geqslant 3$. Then $Forb(P(k_{1},k_{2}))$ has a chromatic number equal to $k_{1}+k_{2}$, for every two-blocks path $P(k_{1},k_{2})$. \end{theorem}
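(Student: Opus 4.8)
The statement packages two inequalities: the lower bound $\chi(Forb(P(k_1,k_2)))\ge k_1+k_2$ and the upper bound $\chi(Forb(P(k_1,k_2)))\le k_1+k_2$. The lower bound I would dispatch with a single witness: the transitive tournament on $k_1+k_2$ vertices has chromatic number $k_1+k_2$ (its underlying graph is complete), whereas any copy of $P(k_1,k_2)$ uses $k_1+k_2+1$ vertices, so the tournament lies in $Forb(P(k_1,k_2))$ and the forbidden class already attains $k_1+k_2$. All the content is then in the upper bound, which I would prove in contrapositive form: every digraph $D$ with $\chi(D)\ge k_1+k_2+1$ contains $P(k_1,k_2)$. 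Since reversing all arcs leaves the underlying graph (hence $\chi$) unchanged and interchanges the ``source in the middle'' and ``sink in the middle'' orientations, and since $P(k_1,k_2)\cong P(k_2,k_1)$, I may fix the orientation in which $P(k_1,k_2)$ is one vertex $x$ emitting two internally disjoint out-dipaths of lengths $k_1$ and $k_2$.

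The engine would be the Gallai--Roy machinery behind Theorem~\ref{directedpath}, refined to record levels. Fix a spanning acyclic subdigraph $H$ of $D$ that is maximal (adding any arc of $D$ closes a directed cycle) and let $g(v)$ be the length of a longest directed $H$-dipath starting at $v$. A one-line check shows $g$ is a proper colouring of $D$: along an arc of $H$ the value of $g$ strictly drops, while a non-arc of $H$ closes a directed cycle through an $H$-dipath and so forces $g$ to strictly rise. Hence $1+\max_v g(v)\ge\chi(D)\ge k_1+k_2+1$. Put $A=\{v:g(v)<k_1\}$; the colouring $g$ uses only $k_1$ values on $A$, so $\chi(D[A])\le k_1$, whence $B=\{v:g(v)\ge k_1\}$ satisfies $\chi(D[B])\ge k_2+1$. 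Every $v\in B$ begins a strictly \emph{descending} out-dipath of length $k_1$ (an initial segment of a longest $H$-out-dipath). The clean geometric point is that a descending out-dipath of length $k_1$ and a strictly \emph{ascending} out-dipath of length $k_2$ issued from the same vertex $x$ are automatically internally disjoint, because after the first step the two dipaths sit on opposite sides of the level $g(x)$; their union is exactly a $P(k_1,k_2)$.

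The remaining and genuinely hard task is to root both blocks at one vertex, i.e.\ to show that the surplus chromatic number concentrated in $B$ cannot be realised without such a branching. The difficulty is precisely the \textbf{internal disjointness of the two blocks}: a length-$k_2$ dipath chosen naively inside $D[B]$ may run through the vertices of the length-$k_1$ block, and the two extremal examples show that no argument ignoring the interaction of the blocks can give the tight bound. I expect to resolve this by anchoring the centre in the bottom layer $B_0=\{v:g(v)=k_1\}$, where the descending $k_1$-block leaves $B$ after a single step and therefore cannot meet any dipath confined to $B$; the goal then reduces to the structural claim that $\chi(D[B])\ge k_2+1$ forces a length-$k_2$ dipath starting in $B_0$. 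I would attack this by induction on $k_2$, peeling off $B_0$ (an independent colour class) and re-running the level analysis, or, should that stall, by a minimal-counterexample reduction to a vertex-critical subdigraph, whose underlying graph has minimum degree at least $k_1+k_2$, in order to force the branching required at the centre. This interaction between the two blocks is the crux; the colouring bookkeeping around it is routine.
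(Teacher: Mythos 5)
First, a framing remark: Theorem~\ref{path} is quoted in this paper from Addario-Berry, Havet and Thomass\'e \cite{2path} as background and is not proved here, so there is no in-paper proof to compare against; your attempt has to stand on its own. Much of your bookkeeping is correct: the transitive tournament on $k_1+k_2$ vertices is a valid witness for the lower bound (it has too few vertices to contain $P(k_1,k_2)$ and its underlying graph is complete); the reduction by arc-reversal and by $P(k_1,k_2)\cong P(k_2,k_1)$ is sound; the Gallai--Roy levelling via a maximal acyclic spanning subdigraph $H$ is correct (arcs of $H$ strictly decrease $g$, arcs outside $H$ strictly increase it, so $g$ is proper); the split into $A=\{g<k_1\}$ and $B=\{g\ge k_1\}$ with $\chi(D[B])\ge k_2+1$ is right; and the observation that a descending block and a path confined to $B$ issued from a vertex of the bottom layer $B_0$ are automatically internally disjoint is a genuinely nice point.

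But the proof stops exactly where the theorem's content begins, and the missing step is not merely unfinished --- as formulated it is false. Your reduction target is the claim that $\chi(D[B])\ge k_2+1$ forces a length-$k_2$ dipath of $D[B]$ starting in $B_0$. Note that your sketch never invokes the hypothesis $k_1+k_2\ge 3$, and the theorem fails without it: take $k_1=k_2=1$ and $D=C^{+}_{3}$ with arcs $(a,b),(b,c),(c,a)$. Then $\chi(D)=3=k_1+k_2+1$, a maximal acyclic $H$ is $\{(a,b),(b,c)\}$, giving $g(a)=2$, $g(b)=1$, $g(c)=0$; so $B=\{a,b\}$ with $\chi(D[B])=2=k_2+1$ and $B_0=\{b\}$ --- yet no arc of $D[B]$ leaves $b$, the structural claim fails, and indeed $C^{+}_{3}$ contains no $P(1,1)$. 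So every step of your machinery goes through while the conclusion fails, which shows the final claim is precisely where all the difficulty (and the hypothesis $k_1+k_2\ge3$) must enter. The obstruction is also structural, not just degenerate: descending arcs of $D$ all lie in $H$ and are perfectly legal between high levels of $B$ and $B_0$, so nothing prevents every arc of $D[B]$ incident to $B_0$ from pointing \emph{into} $B_0$, in which case no positive-length dipath of $D[B]$ starts there at all. Your first fallback, peeling $B_0$ and recursing, loses one unit of chromatic number per peeled level while the required second-block length $k_2$ stays fixed; after one peel you have $\chi\ge k_2$ and Gallai--Roy only returns a path of length $k_2-1$, so the recursion can never close at the tight bound. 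The second fallback (minimum degree $\ge k_1+k_2$ in a critical subdigraph) is stated but never used to produce the branching. The actual proof in \cite{2path} is a substantially more involved argument and nothing as short as this levelling scheme is known, so the honest verdict is: you have correctly isolated the crux --- forcing the two internally disjoint blocks to share their endpoint --- but you have not supplied the idea that resolves it.
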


\noindent Recall that a \textit{subdivision} of a digraph $H$ is a digraph $H'$  obtained from $H$ by replacing each arc $(x,y)$  by an $xy$-dipath of length at least $1$. A digraph  $D$ is said to be \textit{$H$-subdivision-free} if it contains no subdivisions of $H$ as a subdigraph. Inspired by the previous researches,  Cohen et al. \cite{cohen1}  asked about the existence of subdivisions of oriented cycles in highly chromatic digraphs.  In other words, denoting by $S$-$Forb(C)$ the class of digraphs that do not contain  subdivisions of a given oriented cycle $C$ as subdigraphs, Cohen et al. asked if  the chromatic number of $S$-$Forb(C)$ can be bounded.  In the same article, Cohen et al.  provide a negative answer to their question by  proving  a stronger theorem based on a construction  built by   Erd\"os and Lov\'{a}sz \cite{alm} that induces the existence of  hypergraphs with high girth and large chromatic number: \begin{theorem}(Cohen et al., 2018) For any positive integers $b,c$, there exists an acyclic digraph $D$ with $\chi(D) \geqslant c$ in which all oriented cycles have more than $b$ blocks. \end{theorem}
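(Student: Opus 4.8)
The plan is to feed a graph of large girth and large chromatic number into an acyclic orientation tailored so that long directed subpaths are impossible, and then play the girth against the length of those subpaths. The single external input is the Erd\H{o}s--Lov\'asz-type existence result quoted in the statement: for every pair of integers $g$ and $c$ there is a (hyper)graph of girth larger than $g$ whose chromatic number exceeds $c$; I will use its graph incarnation. Fix $b$ and $c$ and set $g:=b(c+1)$. First I would take a graph $G_0$ with girth $>g$ and $\chi(G_0)>c$. Since deleting a single vertex lowers the chromatic number by at most one, removing vertices from $G_0$ one at a time produces an induced subgraph $G$ with $\chi(G)=c+1$ exactly; crucially, passing to an induced subgraph never decreases the girth, so $G$ still has girth $>g=b(c+1)=b\,\chi(G)$. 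Thus the real content extracted from Erd\H{o}s--Lov\'asz is a graph whose girth exceeds $b$ times its chromatic number, while that chromatic number is still larger than $c$.

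Next I would orient $G$ acyclically in the sharpest possible way. Fix a proper colouring $\varphi\colon V(G)\to\{1,\dots,c+1\}$ and orient every edge from its smaller colour to its larger colour. The resulting digraph $D$ is acyclic (colours strictly increase along every arc) and satisfies $\chi(D)=\chi(G)=c+1>c$, since the underlying graph is unchanged. The point of this orientation is that every directed path is colour-increasing, hence meets at most $c+1$ vertices and so has length at most $c$; this is exactly the colour-ordering that realises equality in the Gallai--Roy bound (Theorem~\ref{directedpath}). Now consider any oriented cycle $O$ of $D$. Its underlying cycle has length $\ell>\mathrm{girth}(G)>b(c+1)$, so $\ell/b>c+1$. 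If $O$ decomposed into at most $b$ blocks then, the blocks being directed paths that partition its $\ell$ arcs, some block would be a directed path of length at least $\ell/b>c+1$, contradicting the bound of $c$ on the length of directed paths in $D$. Hence every oriented cycle of $D$ has more than $b$ blocks, and $D$ is the desired digraph.

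The one genuinely load-bearing idea is the interplay in the previous paragraph: the colour orientation simultaneously keeps $D$ acyclic and caps every block (directed subpath) at length $\chi(G)-1$, so ``few blocks'' forces ``a long directed subpath'', which the large girth rules out. The main obstacle is therefore not the orientation but securing the input graph with the precise inequality $\mathrm{girth}>b\cdot\chi$: a black-box high-girth high-chromatic graph may have uncontrolled (even unbounded) chromatic number, so the induced-subgraph trimming that pins $\chi$ to $c+1$ while preserving girth is essential. I would double-check only that the constants line up, namely that $\ell/b$ strictly exceeds the maximal directed-path length, which is exactly what the choice $g=b(c+1)$ guarantees, and that the trimming halts at $\chi=c+1$, which follows from the discrete intermediate-value behaviour of $\chi$ under vertex deletion.
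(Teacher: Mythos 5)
Your proof is correct. Be aware, though, that the paper does not actually prove this statement: it quotes it as a theorem of Cohen et al.\ and only remarks that their proof rests on the Erd\H{o}s--Lov\'asz construction \cite{alm} of hypergraphs with high girth and large chromatic number. Your argument is thus a genuine, and more elementary, alternative to the route the paper alludes to: you need only the graph incarnation of the sparse-and-chromatic existence result (Erd\H{o}s \cite{girth}, which the paper also cites) combined with the colour orientation that witnesses sharpness in the Gallai--Roy theorem (Theorem \ref{directedpath}). All the steps check out: trimming vertices pins the chromatic number at exactly $c+1$ by the discrete intermediate-value behaviour of $\chi$ under single-vertex deletion, while girth cannot decrease under passage to induced subgraphs; the colour orientation is acyclic, leaves the underlying graph unchanged (hence $\chi(D)=c+1\geq c$), and caps every directed path at length $c$; and since the blocks of an oriented cycle in an acyclic digraph are directed paths partitioning its arc set, a cycle of length $\ell>b(c+1)$ with at most $b$ blocks would contain a directed path of length greater than $c$, a contradiction. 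You correctly identify the one subtle point, namely that a black-box Erd\H{o}s graph has uncontrolled chromatic number, so the inequality $\mathrm{girth}>b\cdot\chi$ must be manufactured by the trimming step rather than read off directly. Two harmless remarks: the choice $g=bc$ already suffices, since a block of length at least $\ell/b>c$ gives the contradiction, so your $g=b(c+1)$ has slack; and in the degenerate case where the trimmed graph is a forest (possible only when $c+1\leq 2$) the conclusion about oriented cycles holds vacuously, so nothing breaks. Compared with the hypergraph route of Cohen et al., yours trades the heavier Erd\H{o}s--Lov\'asz machinery for an extra but elementary trimming argument, and is self-contained given results the paper already cites.
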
 

\noindent On the other hand, restricting Cohen et al.'s question  on the class of  strongly connected  digraphs may lead to dramatically different results.  A digraph $D$ is said to be \textit{strongly connected}, or simply \textit{strong}, if for any two vertices $x$ and $y$ of $D$, there is a directed path from $x$ to $y$. A \textit{directed cycle}, or simply a \textit{circuit}, is an oriented cycle whose all arcs have the same orientation. An example is provided by a famous result of Bondy \cite{bondy} that dates back to 1976: \textit{Every strong digraph $D$ contains a directed cycle of length at least $\chi(D)$}. In other words, denoting by $\cal S$ the class of strong digraphs, the circuit $C^{+}_{k}$ of length $k$  satisfies $\chi(S$-$Forb(C^{+}_{k})\cap\cal S)=$ $k-1$. \medskip

\noindent Since any directed cycle of length at least $k$ can be seen as a subdivision of $C^{+}_{k}$, Cohen et al. \cite{cohen1} conjectured in 2018 that Bondy's theorem can be extended to all oriented cycles: 

\begin{conjecture}\label{conjcycle}(Cohen et al., 2018) For every oriented cycle $C$, there exists a constant $f(C)$ such that every strongly connected digraph with chromatic number at least $f(C)$ contains a subdivision of $C$. \end{conjecture}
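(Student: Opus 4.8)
The plan is to read the conjecture as the common generalization of the three results already on the table: the Gallai--Roy theorem (Theorem~\ref{directedpath}) governs a single directed block, Bondy's theorem handles the purely circular case $C=C^{+}_{k}$, and the two-blocks path theorem (Theorem~\ref{path}) is the ``opened-up'' version of the two-blocks cycle. An oriented cycle $C$ is determined up to isomorphism by its cyclic sequence of block lengths $(k_1,\dots,k_{2b})$, where consecutive blocks alternate direction and the number of blocks is even; a subdivision of $C$ in a strong digraph $D$ is a cyclic sequence of $2b$ \emph{junction} vertices joined by pairwise internally disjoint dipaths whose lengths are at least $k_1,\dots,k_{2b}$ and whose orientations alternate. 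So the problem reduces to producing these $2b$ coordinated dipaths inside any $D$ with $\chi(D)\geq f(C)$, and then checking that they close up.

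For the supply of long dipaths I would use the longest-path coloring: color each vertex $v$ by the length of the longest dipath ending at $v$. Every arc is properly colored, so $\chi(D)\le \ell+1$ where $\ell$ is the length of a longest dipath; this is exactly the mechanism behind Theorem~\ref{directedpath}, and it is what manufactures each individual block. To obtain several blocks of \emph{prescribed} lengths meeting at a common junction, I would refine this to a level structure rooted at a well-chosen vertex and pigeonhole over the levels, so that from one vertex many others are reachable by dipaths whose lengths hit the targets $k_i$. Strong connectivity then plays the role it plays in Bondy's argument: it supplies the return segments needed to splice the alternating blocks into a closed cycle rather than an open walk.

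Because coordinating all $2b$ blocks at once is delicate, I would first secure the cases the paper actually treats. For a two-blocks cycle $C(k_1,k_2)$ or a bispindle $B(k_1,k_2;k_3)$ only two or three dipaths must be synchronized, which keeps the bookkeeping finite and should yield bounds of the shape $4k^{2}$ and $4k$ respectively. For a Hamiltonian digraph (respectively a digraph with a Hamiltonian dipath) the spanning dicycle (respectively spanning dipath) gives a global linear order together with a ``free'' dipath between any correctly ordered pair of vertices; the only remaining task is to realize the other blocks by chords of the right lengths, and a high chromatic number --- applied to suitable intervals of the order via the same longest-path coloring --- provides enough such chords. This is the route by which I expect the $4k$ bound for Hamiltonian digraphs, and the positive answer to Addario et al.'s question, to fall out.

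The hard part is \emph{simultaneity}: one must force all $2b$ dipaths to have the correct lengths \emph{and} be pairwise internally disjoint. Extracting the blocks one at a time spends the chromatic budget multiplicatively and offers no control over disjointness, while the clean Hamiltonian arguments rely essentially on the backbone to order and separate the blocks. The real obstacle, and I believe the reason the general conjecture is still open, is to find a single structural decomposition of an arbitrary highly chromatic strong digraph --- presumably a handle or ear decomposition replacing the Hamiltonian backbone --- that delivers all blocks simultaneously with guaranteed disjointness. My plan would therefore be to nail the restricted classes first and then attempt to trade the Hamiltonian hypothesis for such a decomposition.
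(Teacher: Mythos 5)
The statement you were asked to prove is Conjecture \ref{conjcycle}, and the first thing to say is that the paper contains no proof of it: it is an open conjecture of Cohen et al., established only in special cases (two-blocks cycles by Cohen et al.\ and Kim et al., and the restricted classes treated in this paper). Your text, accordingly, is not a proof but a research program, and you concede as much: the ``hard part'' you isolate --- forcing all $2b$ blocks to have the prescribed lengths, to be pairwise internally disjoint, and to close up into a cycle --- is precisely the entire content of the conjecture, and your proposal supplies no mechanism for it. The longest-path coloring produces one long dipath (this is Theorem \ref{directedpath}), but, as you yourself note, extracting blocks one at a time spends the chromatic budget multiplicatively and gives no control over disjointness, and pigeonholing over levels from a root yields reachability, not internal disjointness nor the required alternation of orientations. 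So there is a genuine gap --- indeed \emph{the} gap --- and nothing in the sketch as written closes it; a referee should not certify this as a proof of the conjecture, because none exists.

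It is also worth flagging that even in the restricted cases your proposed mechanism diverges from what the paper actually does. For digraphs with a Hamiltonian dipath or dicycle the paper does not hunt for chords via longest-path colorings on intervals of the order; it partitions the vertices along the backbone into residue classes modulo $k$ (the sets $V_i$), shows that each induced subdigraph admits no secant arcs with respect to the induced ordering --- any secant pair would splice with backbone segments into a subdivision of $C(k_1,k_2)$ or of $B(k_1,k_2;k_3)$ --- and then applies Lemma \ref{nosecant} ($2$-degeneracy, hence $3$-colorability) to each class, yielding the bounds $3k_1$ of Theorem \ref{2cyclehamil} and $4k$ of Theorem \ref{bispindlehamil}. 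For general strong digraphs the paper's route toward the $4k_1(k_2-1)$ bound (Theorem \ref{improvement}) replaces the Hamiltonian backbone not by a handle or ear decomposition, as you speculate, but by a maximal spanning out-tree (Proposition \ref{yarab}), an arc partition into tree-comparable and tree-incomparable arcs, and a reduction to Conjecture \ref{4colorable} on normal spanning trees without secant edges --- again a degeneracy and coloring statement, itself only partially verified (star$^0$- and star$^1$-like trees). Your instinct that the Hamiltonian cases fall first, with bounds of the shape $4k^2$ and $4k$, matches the paper's architecture in spirit, but the engine there is the secant-arc contradiction, not chord extraction from high chromatic number.
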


\noindent For two positive integers $k_1$ and $k_2$, a \textit{cycle with two blocks} $C(k_1,k_2)$ is an oriented cycle which consists of  two internally disjoint directed paths of lengths $k_1$ and $k_2$ respectively. In their article,  Cohen et al. \cite{cohen1} proved Conjecture \ref{conjcycle}  for the case of  two-blocks cycles. More precisely, they showed that the chromatic number of strong digraphs with no subdivisions of a two-blocks cycle $C(k_{1},k_{2})$ is bounded from above by  $O((k_{1}+k_{2})^4)$:

\begin{theorem}	Let $k_{1}$ and $k_{2}$ be positive integers such that $k_{1} \geqslant k_{2} \geqslant 2$ and $k_{1} \geqslant 3$, and let $D$ be a digraph in $S$-$Forb(C(k_{1},k_{2})) \cap \cal S$. Then the chromatic number of $D$ is at most  $(k_{1}+k_{2}-2)(k_{1}+k_{2}-3)(2k_{2}+2)(k_{1}+k_{2}+1)$.\end{theorem}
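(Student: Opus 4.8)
The plan is to argue by contradiction: assume $\chi(D) > (k_1+k_2-2)(k_1+k_2-3)(2k_2+2)(k_1+k_2+1)$ and exhibit a subdivision of $C(k_1,k_2)$, i.e.\ (as $k_1 \ge k_2$) two internally disjoint dipaths from a common vertex $x$ to a common vertex $y$ of lengths at least $k_1$ and at least $k_2$. Fix a vertex $r$; since $D$ is strong, every vertex is reachable from $r$, so the out-distance $d(v) = \dist(r,v)$ is finite and partitions $V(D)$ into levels $V_i = \{v : d(v) = i\}$. Every arc $(u,v)$ obeys $d(v) \le d(u)+1$, so arcs are of three types: forward ($V_i \to V_{i+1}$), flat ($V_i \to V_i$), and backward ($V_i \to V_j$ with $j<i$); moreover each $v$ has a geodesic from $r$ meeting every level $0,1,\dots,d(v)$ in exactly one vertex.

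The engine of the argument is a local-to-global step: if a \emph{single} level induces a subdigraph of large chromatic number, then $D$ already contains the desired subdivision. Indeed, suppose $\chi(D[V_i]) \ge k_1+k_2+1$ for some $i$. By Theorem~\ref{path} we have $\chi(Forb(P(k_1,k_2)))=k_1+k_2$, so $D[V_i]$ contains a two-blocks path $P$ whose two blocks share their \emph{terminal} vertex; write them as dipaths $p_0 \to \cdots \to p_{k_1}$ and $p_{k_1+k_2} \to \cdots \to p_{k_1}$ with common sink $y := p_{k_1}$ and source-ends $p_0, p_{k_1+k_2}$, all lying in $V_i$. Take geodesics from $r$ to $p_0$ and to $p_{k_1+k_2}$, and let $x$ be their last common vertex; from $x$ these give two internally disjoint dipaths $x \rightsquigarrow p_0$ and $x \rightsquigarrow p_{k_1+k_2}$ whose interiors lie strictly below level $i$ (one vertex per level), hence are disjoint from $P \subseteq V_i$. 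Concatenating, $x \rightsquigarrow p_0 \to \cdots \to y$ and $x \rightsquigarrow p_{k_1+k_2} \to \cdots \to y$ are internally disjoint $xy$-dipaths of lengths at least $k_1$ and at least $k_2$, a subdivision of $C(k_1,k_2)$. Note that the block lengths come out right \emph{automatically}, so this clean step reduces us to the case where every level satisfies $\chi(D[V_i]) \le k_1+k_2$.

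It remains to bound $\chi(D)$ under this hypothesis, and this is where the remaining factors arise. Since $\chi$ is subadditive over a vertex partition, for any modulus $m$ one has $\chi(D) \le m \cdot \max_a \chi(D[W_a])$ with $W_a = \{v : d(v) \equiv a \pmod m\}$. Choosing $m = 2k_2+2$ destroys all forward arcs inside each class (these join consecutive levels, of different residue) and leaves only flat arcs together with backward arcs whose drop is a multiple of $m$. The intended mechanism is that such a long backward connection, paired with the geodesic descending through the traversed levels, produces a long dipath which — by re-applying the two-blocks-path engine inside those levels — can be matched with a second internally disjoint dipath to force the forbidden subdivision; making this precise while preserving disjointness is what one iterates, each distance-based reduction costing a factor linear in $k_1+k_2$, so that every class $W_a$ is bounded by $(k_1+k_2-2)(k_1+k_2-3)(k_1+k_2+1)$ and the subadditivity bound returns the claimed product.

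The main obstacle is therefore not the local engine of the second paragraph, which is clean, but the global assembly: inter-level arcs, and long backward arcs in particular, can route the two closing dipaths through the very two-blocks path one is trying to complete, destroying internal disjointness. Controlling how far backward arcs may reach, and keeping the closing dipaths disjoint from the flat two-blocks paths living inside individual levels, is the technical heart of the proof; it is the bookkeeping of these disjointness constraints, rather than any single deep idea, that accounts for the quartic bound $(k_1+k_2-2)(k_1+k_2-3)(2k_2+2)(k_1+k_2+1)$ and closes the contradiction.
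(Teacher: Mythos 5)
Your attempt should first be situated correctly: the statement you were given is quoted in this paper as a known result of Cohen et al.\ \cite{cohen1}; the paper itself contains no proof of it, so your proposal can only be measured against the cited source and against its own internal completeness. Measured that way, your second paragraph --- the ``local engine'' --- is correct and is indeed the standard opening move of the known proofs: if some BFS level $V_i$ satisfies $\chi(D[V_i])\ge k_1+k_2+1$, then Theorem \ref{path} yields inside $V_i$ a two-blocks path whose two blocks share their terminal vertex, and the two geodesics from the root $r$ to the source-ends, cut at their common vertex of maximal level, close it into two internally disjoint $xy$-dipaths of lengths at least $k_1$ and $k_2$. Your disjointness bookkeeping there (one vertex per level on each geodesic, interiors strictly below level $i$, hence disjoint from $V_i$) is sound.

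The genuine gap is everything after that reduction, and you in effect concede it yourself. The bound $\chi(D[W_a])\le (k_1+k_2-2)(k_1+k_2-3)(k_1+k_2+1)$ for the residue classes modulo $2k_2+2$ is asserted, not proved; it is reverse-engineered from the target product (as is the choice of modulus, which you never motivate). Moreover the one concrete mechanism you sketch for it fails as stated: a backward arc $(u,v)$ of drop at least $2k_2+2$ contributes a dipath of length $1$, and since $k_2\ge 2$ a subdivision of $C(k_1,k_2)$ has no block of length $1$; to exploit such an arc one must build a long $v$-to-$u$ dipath avoiding everything already used, and there is no reason the geodesic to $u$ passes through $v$ --- geodesics ascend from $r$ and need not revisit the head of a backward arc. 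After your first reduction, each $D[W_a]$ is a union of level-digraphs of chromatic number at most $k_1+k_2$ linked by long backward arcs, and nothing in your text rules out those backward arcs inflating $\chi(D[W_a])$ far beyond the claimed cubic bound; the phrase ``each distance-based reduction costing a factor linear in $k_1+k_2$'' names an iteration that is never defined. In the actual proof of Cohen et al.\ the factors $(k_1+k_2-2)$ and $(k_1+k_2-3)$ come from specific intermediate lemmas (two-blocks paths of shifted lengths forbidden inside structured subdigraphs, together with an analysis of the strong components of the levels), not from generic bookkeeping. As written, your proposal establishes only the implication ``some level has chromatic number greater than $k_1+k_2$ implies $D$ contains the subdivision''; the quantitative theorem remains unproved.
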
 

\noindent  More recently, this bound was  improved by  Kim et al. \cite{kim} as follows: 
\begin{theorem} (Kim et al., 2018) Let $k_{1}$ and $k_{2}$ be positive integers such that $k_{1} \geqslant k_{2} \geqslant 1$ and $k_{1} \geqslant 2$,  and let $D$ be a digraph in $S$-$Forb(C(k_{1},k_{2})) \cap \cal S$. Then the chromatic number of $D$ is at most $2 (2k_1-3)(k_1+2k_2-1)$. \end{theorem}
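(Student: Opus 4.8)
The plan is to fix a root $r \in V(D)$, build a breadth-first out-tree $T$ from $r$, and set $\ell(v) = \mathrm{dist}_D(r,v)$; since $D$ is strong every vertex is reached, and, as the levels come from a shortest-path tree, every arc $(u,v)$ satisfies $\ell(v) \le \ell(u)+1$. I would then bound $\chi(D)$ as a product $\chi(D) \le \chi(D_1)\,\chi(D_2)$, where $A(D)$ is split into two spanning subdigraphs $D_1, D_2$ so that proper colorings of each yield a proper coloring of $D$ through the pair of colors. The natural split is into \emph{non-backward} arcs (those with $\ell(v) \in \{\ell(u), \ell(u)+1\}$) and \emph{backward} arcs (those with $\ell(v) < \ell(u)$), the aim being to bound one factor by roughly $k_1 + 2k_2 - 1$ and the other by roughly $2(2k_1 - 3)$.

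The engine of the argument is a single structural observation exploiting $T$. Given any non-tree arc $(u,v)$, let $z = \mathrm{lca}_T(u,v)$ at level $h$; the tree gives two internally disjoint dipaths from $z$ to $v$, namely the tree path $Q$ from $z$ to $v$ (length $\ell(v)-h$) and the tree path from $z$ to $u$ followed by the arc $(u,v)$ (length $\ell(u)-h+1$). If the longer of these has length at least $k_1$ and the shorter at least $k_2$, then together they form a subdivision of $C(k_1,k_2)$, which is forbidden. Hence for every non-tree arc $(u,v)$ with $\ell(v)\le\ell(u)$ one must have $\ell(u)-h \le k_1-2$ or $\ell(v)-h \le k_2-1$. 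This is the quantitative constraint I would push throughout: it caps how far below a branch point an arc may reach, and in particular it bounds the span $\ell(u)-\ell(v)$ of a backward arc whose endpoints both lie far below their common ancestor.

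With this in hand I would bound the two factors separately. For the non-backward part $D_1$ I intend to control directed path lengths and invoke a Gallai--Roy style coloring (Theorem \ref{directedpath}), or more efficiently the two-blocks path bound (Theorem \ref{path}) applied to the layered subdigraph: a long monotone path together with one backward arc already produces a two-blocks configuration, so the no-subdivision hypothesis caps the relevant path lengths and yields a factor of order $k_1 + 2k_2 - 1$. For the backward part $D_2$ I would combine a coloring by $\ell(v)$ taken modulo a window of width $\Theta(k_1)$ with a parity split — this is where the factor $2$ and the term $2k_1-3 = (k_1-2)+(k_1-1)$ enter — so that the two endpoints of any short-span backward arc receive distinct colors. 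Multiplying the two bounds gives $\chi(D) \le 2(2k_1-3)(k_1+2k_2-1)$.

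The main obstacle is precisely the backward arcs whose common ancestor $z$ lies close to the head $v$ (the case $\ell(v)-h \le k_2-1$ of the constraint): for these the span $\ell(u)-\ell(v)$ need not be bounded, so a naive modular coloring of the levels does not control them. Handling these requires a more delicate potential than the raw BFS level — refining the tree, or replacing $\ell$ by a longest-path valuation on an auxiliary acyclic part — together with a careful accounting that reuses the structural constraint to show that such \emph{deep} backward arcs cannot coexist with the long monotone paths forced by a large chromatic number. Converting this qualitative control into the exact constant $2(2k_1-3)(k_1+2k_2-1)$, and checking the boundary regimes $k_2=1$ and $k_1=2$, is the bookkeeping-heavy part of the proof.
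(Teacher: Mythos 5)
Note first that the paper does not prove this statement: it is quoted from Kim et al.\ \cite{kim}, so there is no in-paper proof to match your proposal against, and I judge it on its own terms. The sound parts of your plan are the BFS levelling, the product decomposition $\chi(D)\le\chi(D_1)\chi(D_2)$ (this is Lemma \ref{far}), and the lca constraint, which is correctly derived (including the degenerate case $v\le_T u$, where $\ell(v)-h=0\le k_2-1$ holds vacuously). The genuine gap is that neither factor bound is actually established. For $D_1$ (level and forward arcs) you assert a bound of order $k_1+2k_2-1$ by appeal to Theorem \ref{directedpath} or Theorem \ref{path}, but no argument is given, and none is immediate: a long dipath inside $D_1$ --- in particular a long dipath inside a single BFS level --- does not by itself yield two internally disjoint dipaths joining the same pair of vertices, and two-blocks \emph{paths} are not forbidden here, only subdivisions of the two-blocks \emph{cycle}. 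Your sentence ``a long monotone path together with one backward arc already produces a two-blocks configuration'' cannot close this hole, since $D_1$ contains no backward arcs by construction. Closing a long path into the forbidden cycle requires an essential use of strong connectivity (a return dipath avoiding the interior of the first), and that is exactly where the argument of \cite{kim} becomes what the present paper calls ``complicated and technical,'' based on Bondy's theorem \cite{bondy}.

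The second factor is worse, and you concede it yourself: backward arcs $(u,v)$ falling in the branch $\ell(v)-h\le k_2-1$ of your constraint have unbounded span $\ell(u)-\ell(v)$, so no colouring by ``level modulo a window of width $\Theta(k_1)$ plus a parity split'' separates their endpoints; moreover such arcs could a priori induce a subdigraph of large chromatic number on their own, so what is missing is an idea, not bookkeeping. What your plan proves, in effect, is the special case in which every backward arc ends at an ancestor of its tail --- and that is precisely the property this paper manufactures by replacing an arbitrary spanning out-tree with a \emph{maximal} one (Proposition \ref{yarab}), which is the engine of Theorem \ref{improvement}. But note the trade-off that blocks a quick repair: passing to a maximal out-tree destroys the BFS property $\ell(v)\le\ell(u)+1$ on which your $D_1$ analysis implicitly relies, and the ancestor-comparable arcs then become the hard part, which this paper can handle only conditionally (via Conjecture \ref{4colorable}, yielding $4k_1(k_2-1)$ when the tree lies in $\cal T$). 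The two halves of your decomposition thus each demand a spanning tree with a different property, and no single tree supplies both; as written, the proposal yields neither the constant $2(2k_1-3)(k_1+2k_2-1)$ nor any unconditional bound.
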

 
\noindent  As a key step, Kim et al. studied in the  same article the existence of two-blocks cycles in Hamiltonian digraphs (a digraph $D$ is said to be \textit{Hamiltonian} if it contains a Hamiltonian directed cycle, that is, a directed cycle passing through all the vertices of $D$),  and they were able to reach a linear upper bound for the chromatic number of such digraphs containing no subdivisions of a given  two-blocks cycle. Denoting by $\cal H$ the class of Hamiltonian digraphs, these authors proved precisely the following:

\begin{theorem}Let $k_1$ and $k_2$ be positive integers such that $k_1+k_2 \geq 3$, and let $D$ be a digraph in $S$-$Forb(C(k_{1},k_{2})) \cap \cal H$. Then  $\chi(D) \leq k_1+k_2$.\end{theorem}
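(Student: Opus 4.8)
The plan is to fix a Hamiltonian directed cycle $C=v_0v_1\cdots v_{n-1}v_0$ of $D$ and to produce a proper colouring of the underlying graph $G(D)$ with at most $k_1+k_2$ colours. The guiding observation is that a subdivision of $C(k_1,k_2)$ is exactly a pair of internally disjoint dipaths with the same initial vertex $x$ and the same terminal vertex $y$, one of length at least $k_1$ and the other of length at least $k_2$; so to exhibit such a subdivision it suffices to find one dipath of $D$ that can be \emph{closed up} by a sub-dipath of $C$ into such a pair. Cutting $C$ at the arc $v_{n-1}v_0$ endows $V(D)$ with the linear order $v_0<v_1<\cdots<v_{n-1}$, along which $v_0\to v_1\to\cdots\to v_{n-1}$ is a spanning dipath; call an arc $v_iv_j$ \emph{forward} if $i<j$ and \emph{backward} if $i>j$. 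The whole difficulty is that this spanning dipath is long, so no colouring based merely on lengths of dipaths of $D$ can be bounded: the forbidden configuration must instead be used to control how dipaths interact with $C$.

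The engine of the argument controls the backward arcs. Consider a directed path $Q=u_0\to u_1\to\cdots\to u_m$ all of whose arcs are backward, and write $d_s=\mathrm{ind}(u_s)$, so that $d_0>d_1>\cdots>d_m$. For any $s<t$, the sub-dipath $u_s\to\cdots\to u_t$ (of length $t-s$, with internal vertices of index strictly between $d_t$ and $d_s$) together with the complementary sub-dipath of $C$, namely $v_{d_s}\to v_{d_s+1}\to\cdots\to v_{n-1}\to v_0\to\cdots\to v_{d_t}$ (of length $n-(d_s-d_t)$, with internal vertices of index outside $[d_t,d_s]$), form two internally disjoint $v_{d_s}v_{d_t}$-dipaths. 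Hence if some window satisfied $t-s\ge k_2$ and $n-(d_s-d_t)\ge k_1$ we would obtain a subdivision of $C(k_1,k_2)$. A short counting argument rules this out once $m$ is large: if $m\ge k_1+k_2-1$, then taking the window $[0,k_2]$ the absence of the configuration forces $d_0-d_{k_2}\ge n-k_1+1$, while the remaining $m-k_2\ge k_1-1$ strictly decreasing steps force $d_{k_2}-d_m\ge k_1-1$, whence $d_0-d_m\ge n$, contradicting $d_0-d_m\le n-1$. Therefore every backward dipath has length at most $k_1+k_2-2$, and the function $c^-(v)=$ (length of a longest backward dipath ending at $v$) takes its values in $\{0,1,\dots,k_1+k_2-2\}$ and strictly increases along every backward arc; thus $c^-$ properly colours the backward arcs of $D$ with $k_1+k_2-1$ colours.

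It remains to handle the forward arcs, where the configuration appears in its \emph{same-side} form: two internally disjoint \emph{increasing} dipaths from $v_a$ to $v_b$ of lengths at least $k_1$ and at least $k_2$ again constitute a subdivision of $C(k_1,k_2)$. In particular the subdigraph induced on any set of pairwise (forward-)adjacent vertices is a transitive tournament, which already contains $C(k_1,k_2)$ once it has $k_1+k_2$ vertices; hence every clique of $G(D)$ has at most $k_1+k_2-1$ vertices. The delicate point is that a single increasing dipath is \emph{not} controlled by pairing it with $C$ — the complementary sub-dipath of $C$ then runs through the same interval and is not internally disjoint from it — so the forward part has to be bounded by a genuinely separate structural argument on the acyclic digraph formed by the forward arcs (which contains the spanning dipath). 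The natural target here is to show that this acyclic $C(k_1,k_2)$-free digraph has chromatic number at most $k_1+k_2-1$, a bound tight for transitive tournaments and closely related to the two-blocks \emph{path} phenomenon of Theorem~\ref{path}; establishing it is, I expect, the genuinely hard sub-problem.

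The main obstacle is the fusion of these two mechanisms into a single colouring using $k_1+k_2$ colours, rather than the product bound $(k_1+k_2-1)^2$ that a naive superposition of the two colourings would yield. I would therefore not colour forward and backward arcs independently, but instead aim at the orientation form of the Gallai--Roy theorem (Theorem~\ref{directedpath}): it is enough to orient the edges of $G(D)$ acyclically so that no directed path has more than $k_1+k_2$ vertices. The cyclic order together with the level function $c^-$ furnishes a natural candidate, for instance the acyclic orientation induced by ordering the vertices lexicographically by $\bigl(c^-(v),\mathrm{ind}(v)\bigr)$; here between-level steps are controlled by the $k_1+k_2-1$ available values of $c^-$, and the backward windowing estimate above controls descending runs. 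What remains, and what I expect to demand the most care, is to bound the length of an \emph{increasing} run occurring inside a single $c^-$-level: one must show either that such a run is short or that it may be rerouted so as not to lengthen a monotone path, exactly because an over-long increasing run together with a second parallel increasing dipath would reproduce the forbidden same-side subdivision. Pinning down this last estimate, and checking that the local bounds on descending and increasing runs combine to the global bound $k_1+k_2-1$ on the longest directed path, is the crux of the argument.
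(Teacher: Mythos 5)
Your windowing lemma for the backward arcs is correct and cleanly argued: pairing a sub-dipath $u_s\to\cdots\to u_t$ of a backward dipath with the complementary arc of the Hamiltonian cycle does give two internally disjoint $v_{d_s}v_{d_t}$-dipaths, and your counting ($d_0-d_{k_2}\geq n-k_1+1$ from the forbidden configuration, $d_{k_2}-d_m\geq k_1-1$ from strict descent, contradicting $d_0-d_m\leq n-1$) correctly shows that every dipath consisting of backward arcs has length at most $k_1+k_2-2$, so that $c^-$ properly colours the backward subdigraph with $k_1+k_2-1$ colours in the spirit of Theorem~\ref{directedpath}. But that is the only part of the theorem you actually establish. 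The two remaining components --- bounding the chromatic number of the forward subdigraph, and fusing the two mechanisms additively into $k_1+k_2$ colours rather than the product that Lemma~\ref{far} would give --- are precisely where you stop, and you say so yourself (``the genuinely hard sub-problem'', ``the crux of the argument''). A proposal that defers its two hardest steps is a program, not a proof.

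The gap is not cosmetic, because the intermediate statement you nominate as the natural target is false as stated. An acyclic $C(k_1,k_2)$-subdivision-free digraph need not have chromatic number at most $k_1+k_2-1$, nor bounded at all: by the theorem of Cohen et al.\ quoted in the introduction, there exist acyclic digraphs of arbitrarily large chromatic number in which every oriented cycle has more than two blocks, and since a subdivision of a two-blocks cycle is again a two-blocks cycle, such digraphs contain no subdivision of $C(k_1,k_2)$ whatsoever. Hence any bound on your forward subdigraph must essentially exploit the spanning increasing dipath $v_0\to\cdots\to v_{n-1}$ it contains --- that is, it is exactly the $\mathcal{HP}$ case, and the available machinery for that case (the mod-$k_1$ decomposition with Claim~\ref{dinosecant} and Lemma~\ref{nosecant}, as in Theorem~\ref{2cyclehamil}) yields only $3\max\{k_1,k_2\}$, which already exceeds $k_1+k_2$ on its own, before any fusion with the backward colouring. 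Your sketched lexicographic orientation by $\bigl(c^-(v),\mathrm{ind}(v)\bigr)$ does not obviously keep directed paths short inside a single $c^-$-level: as you yourself observe, a long increasing run cannot be killed by closing it up along $C$, and no substitute estimate is supplied. Kim et al.~\cite{kim} obtain the additive bound $k_1+k_2$ by a single potential function defined along the Hamiltonian cycle that handles chords of both orientations simultaneously, not by superposing two separate colourings; without an argument of that kind, your two correct observations do not combine to the claimed bound.
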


\noindent  In \cite{2path}, Addario et al. asked if  the upper bound of strongly connected digraphs having no subdivisions of $C(k_1,k_2)$ can be improved to $O(k_1 + k_2)$, which  remains an  open problem. In this article, we conjecture that the chromatic number of  digraphs having a spanning out-tree (every strong digraph contains a spanning out-tree) without subdivisions of a two-blocks cycle $C(k_{1},k_{2})$ may be  improved to $4k^2$, where $k= max$ $ \{k_1,k_2\}$,  we prove our conjecture for some particular cases and we introduce an approach upon which  the interested reader may build to overcome the problem.    As a key step, we consider $C(k_1,k_2)$-subdivision-free digraphs having a Hamiltonian directed path, and we bound from above  the chromatic number of such digraphs by $3(k-1)$. This intermediate result will be used in the improvement mentioned before, replacing the complicated and technical proof in \cite{kim} based on Bondy's theorem \cite{bondy} which describes the strong digraphs structural properties, by an elementary  one based only on the simple notion of maximal out-tree. \medbreak

\noindent Since every tournament contains a Hamiltonian directed path (see \cite{camion}),  our work  on the chromatic number of $C(k_1,k_2)$-subdivision-free digraphs having a Hamiltonian directed path leads us to wonder about the chromatic number of tournaments having no subdivisions of a given oriented cycle formed of $t$ blocks with $t>2$ .   Our question is a weak version of Rosenfeld's conjecture which predicts that every tournament of order $ n \geq 3$ contains every oriented Hamiltonian cycle except possibly the directed one (see \cite{Rosenfled}). The best known result, due to Havet \cite{Havet}, asserts that Rosenfeld's conjecture is true for  every tournament of order $n \geq 68$.   In what follows, we denote by $C(k_{1}, k_{2}, ..., k_{t})$ the oriented cycle $C$ having $t$ blocks of consecutive lengths $k_{1}, k_{2}, ..., k_{t}$. In this case, we say that $C$ is a $t$-blocks cycle.  According to the definition of a block and a subdivision, note  that $t$   must be an even integer and that a subdivision of a $t$-blocks cycle is also a $t$-blocks cycle.  In this article,   we prove  that every tournament of order $m+\sum_{i=1}^{2m} k_i$ contains a subdivision $H$ of the oriented cycle  $C(k_1,k_2,...,k_{2m})$ such that the lengths of  at least $m$ blocks of $H$ are exactly the same as the lengths of the  corresponding ones of $C$, given that $k_i+k_{i+1} \geq 3$ for all $i \in \{1, 3, ..., 2m-1\}$ and $m \geq 2$. \medbreak 

\noindent A \textit{$p$-spindle} is the union of $p$ internally disjoint $xy$-dipaths for some vertices $x$ and $ y$. In this case, $x$ is the \textit{tail} of the spindle and $y$ is its \textit{head}.   A \textit{$(p+q)$-bispindle} is the internally disjoint union of a $p$-spindle with tail $x$ and head $y$ and a $q$-spindle with tail $y$ and head $x$.  In other words, it is the union of $p$ $xy$ -dipaths and $q$ $yx$-dipaths, all of these dipaths being pairwise internally disjoint. Since directed cycles and two-blocks cycles can be seen as $(1+1)$-bispindles and $2$-spindles respectively,  Cohen et al. \cite{cohen2} asked about the existence of spindles and bispindles in strong digraphs with large chromatic number. First, they pointed the existence of strong digraphs with large chromatic number that do not contain neither $3$-spindle nor $(2+2)$-bispindle.  Undoubtedly, this result guides them to focus  in their study on the existence of $(2+1)$-bispindles in strong digraphs. Denoting by $B(k_{1},k_{2};k_{3})$ the $(2+1)$-bispindle formed by the internally disjoint union of two $xy$-dipaths, one of length $k_{1}$ and the other of length  $k_{2}$, and one $yx$-dipath of length $k_{3}$, Cohen et al. \cite{cohen2} conjectured  the following:

\begin{conjecture}(Cohen et al., 2017)\label{bispindle} Let $D$ be a digraph in $S$-$Forb(B(k_{1},k_{2};k_{3})) \cap \cal S$. Then there exists a function $g:\mathbb{N}^{3} \longrightarrow \mathbb{N}$ such that $\chi(D) \leq g(k_{1},k_{2},k_{3})$.\end{conjecture}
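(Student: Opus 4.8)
\noindent The plan is to prove the contrapositive — that a strong digraph of sufficiently large chromatic number must contain a subdivision of $B(k_1,k_2;k_3)$ — by reducing the general strong case to a structured one in which the backbone is explicit. Write $k=\max\{k_1,k_2,k_3\}$. As a first, self-contained lemma I would settle the case of digraphs carrying a Hamiltonian directed path, aiming at a bound linear in $k$ in the spirit of the $3(k-1)$ bound already available for $C(k_1,k_2)$. The linear order $v_1<v_2<\cdots<v_n$ induced by the Hamiltonian directed path is the whole point: the path itself supplies forward dipaths of every prescribed length between ordered pairs, while a maximal out-tree rooted at $v_1$ organizes the remaining (backward) arcs. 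One then reads off the bispindle from a pair $x=v_i$, $y=v_j$ equipped with two internally disjoint forward $xy$-dipaths of lengths $\geq k_1$ and $\geq k_2$ and one backward $yx$-dipath of length $\geq k_3$; dually, I would color each vertex by the profile recording the longest such partial configurations ending at it and verify that the absence of $B(k_1,k_2;k_3)$ forces this coloring to be proper with a palette of size $O(k)$.

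\noindent The bridge to general strong digraphs rests on the structural identity that a subdivision of $B(k_1,k_2;k_3)$ is precisely a two-blocks cycle $C(k_1',k_2')$ with $k_1'\geq k_1$, $k_2'\geq k_2$, tail $x$ and head $y$, closed off by a $yx$-dipath of length $\geq k_3$ that meets the two-blocks cycle only at $x$ and $y$. Since forbidding $B(k_1,k_2;k_3)$ is strictly weaker than forbidding $C(k_1,k_2)$, any $D\in S$-$Forb(B(k_1,k_2;k_3))\cap\mathcal{S}$ whose chromatic number exceeds the known quadratic threshold for $S$-$Forb(C(k_1,k_2))\cap\mathcal{S}$ already contains such a two-blocks cycle. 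The whole problem is thereby reduced to the following: given a two-blocks cycle with tail $x$ and head $y$ inside a strong digraph of large chromatic number, produce a $yx$-dipath of length $\geq k_3$ internally disjoint from it. Strong connectivity guarantees at least one $yx$-dipath, and Bondy's theorem guarantees a directed cycle of length $\geq\chi(D)$ from which to draw length; passing first to a subdigraph whose underlying graph has minimum degree $\geq g(k_1,k_2,k_3)$ (extracted from the large chromatic number) supplies the local room needed to reroute this path off the forbidden internal vertices.

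\noindent The hard part — and the reason the conjecture remains open in full generality — is to control the \emph{length} of this return dipath while keeping it internally disjoint, and to do so simultaneously with the choice of the two-blocks cycle rather than for a fixed one. Unlike the Hamiltonian setting, where the spanning cycle (or path) furnishes long dipaths between every ordered pair for free, a strong digraph may be such that every $yx$-dipath avoiding a prescribed vertex set is short, so mere strong connectivity gives no lower bound on the required length $\geq k_3$. My proposed way around this is not to fix the two-blocks cycle in advance: using the minimum-degree subdigraph together with Bondy's long cycle, I would seek the pair $(x,y)$, the two forward dipaths, and the long disjoint return path as a single configuration, iterating the extraction so that whenever the candidate return path is too short one re-enters the argument on a residual digraph of smaller chromatic number. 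Making this iteration terminate with the lower bound $k_3$ intact — equivalently, reducing the strong case cleanly to the Hamiltonian-directed-path lemma of the first paragraph — is the crux on which the whole proof turns, and is where I expect the genuine difficulty to lie.
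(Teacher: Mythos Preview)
The paper does not prove this statement: it is recorded as an open conjecture of Cohen et al., and the paper's only contribution toward it is the special case of \emph{Hamiltonian} digraphs (Theorem~\ref{bispindlehamil}), where a Hamiltonian directed \emph{cycle} is assumed and the bound $4k$ is obtained via the secant-arcs technique (Lemma~\ref{nosecant}). So there is no ``paper's own proof'' of the full statement to compare against; your outline is an attack on an open problem, and you are candid in the last paragraph that the crux is unresolved.

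There is, however, a genuine error in your first step. You propose to first bound $\chi$ on $S\text{-}Forb(B(k_1,k_2;k_3))\cap\mathcal{HP}$, i.e.\ digraphs with a Hamiltonian directed \emph{path}, hoping for a linear bound ``in the spirit of the $3(k-1)$ bound for $C(k_1,k_2)$''. This is impossible: the transitive tournament $TT_n$ has a Hamiltonian directed path, is acyclic and hence contains no subdivision of any bispindle (a bispindle always contains a directed cycle), yet $\chi(TT_n)=n$. Thus $S\text{-}Forb(B(k_1,k_2;k_3))\cap\mathcal{HP}$ has unbounded chromatic number, and no profile-coloring argument can give what you claim. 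The analogy with $C(k_1,k_2)$ breaks down precisely because a two-blocks cycle needs no return dipath, whereas the bispindle does; this is why the paper works with a Hamiltonian directed \emph{cycle}, which supplies that return path for free.

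For the passage to general strong digraphs, your reduction (find a $C(k_1,k_2)$-subdivision via the known quadratic bound, then try to thread a long internally disjoint $yx$-dipath through it) is a natural first thought, and you correctly identify the obstruction: strong connectivity gives a $yx$-dipath but no control over its length once internal vertices are forbidden, and one cannot fix the two-blocks cycle in advance. Your proposed iteration is only a hope, not an argument, and the paper offers nothing here either; the general conjecture remains open.
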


\noindent In the same paper, Cohen et al. confirmed  their  conjecture for $B(k_1,1;k_3)$ and they attained a  better bound for the case where  $k_{1}$ is arbitrary and  $k_{2}=k_{3}=1$. The upper  bound that Cohen et al. provided for  the chromatic number of digraphs in $S$-$Forb(B(k_{1},1;k_{3})) \cap \cal S$ is huge and certainly not the best possible.  In this article, we contribute to Conjecture \ref{bispindle} by  affirming it  for  the class of Hamiltonian digraphs.       
\section{Preliminaries and definitions} 
A graph $G$ is said to be \textit{$d$-degenerate}, if any  subgraph of $G$ contains a vertex having at most $d$ neighbors. Using an inductive argument, we may easily remark that any $d$-degenerate graph is $(d+1)$-colorable.\medbreak

\noindent Given two graphs $G_{1}$ and $G_{2}$, $G_{1} \cup G_{2}$ is defined to be the graph whose vertex-set is $V(G_{1}) \cup V(G_{2})$ and whose edge-set is $E(G_{1}) \cup E(G_{2})$. The next lemma will be useful for the coming proofs:
\begin{lemma}\label{far} 
	$\chi(G_{1} \cup G_{2}) \leqslant \chi(G_{1}) \times \chi(G_{2})$ for any two graphs  $G_{1}$ and $G_{2}$.
\end{lemma}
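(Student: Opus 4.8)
The plan is to use the standard \emph{product coloring} construction. Let $c_1 = \chi(G_1)$ and $c_2 = \chi(G_2)$, and fix proper colorings $\phi_1$ of $G_1$ with color set $\{1,\ldots,c_1\}$ and $\phi_2$ of $G_2$ with color set $\{1,\ldots,c_2\}$. The only technical point is that the vertex set of $G_1 \cup G_2$ is $V(G_1)\cup V(G_2)$, so a vertex may lie in one of the two graphs but not the other. First I would view both $G_1$ and $G_2$ as graphs on the common vertex set $V = V(G_1)\cup V(G_2)$, treating each vertex of $V\setminus V(G_i)$ as an isolated vertex of $G_i$. Since adjoining isolated vertices never increases the chromatic number, I may extend $\phi_1$ and $\phi_2$ to all of $V$ (assigning, say, color $1$ to the newly isolated vertices) while keeping them proper colorings.

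Then I would define a coloring $\phi$ of $G_1\cup G_2$ on $V$ by the pair $\phi(v) = (\phi_1(v), \phi_2(v))$, taking values in the product $\{1,\ldots,c_1\}\times\{1,\ldots,c_2\}$. To check that $\phi$ is proper, take any edge $uv$ of $G_1\cup G_2$. By definition of the union, $uv \in E(G_1)$ or $uv\in E(G_2)$. In the first case $\phi_1(u)\neq\phi_1(v)$, so the first coordinates of $\phi(u)$ and $\phi(v)$ differ; in the second case the second coordinates differ. In either case $\phi(u)\neq\phi(v)$, so adjacent vertices receive distinct colors.

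Finally, the number of colors used by $\phi$ is at most $|\{1,\ldots,c_1\}\times\{1,\ldots,c_2\}| = c_1 c_2$, which yields $\chi(G_1\cup G_2)\leqslant \chi(G_1)\times\chi(G_2)$. There is essentially no hard step here: the whole content is the observation that the product of two proper colorings separates any edge witnessed by either graph. The only place requiring a moment's care is the bookkeeping for vertices lying in exactly one of $V(G_1)$ and $V(G_2)$, which is dispatched by the harmless addition of isolated vertices noted above.
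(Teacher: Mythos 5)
Your proof is correct and coincides with the paper's own argument: both use the product coloring $(\phi_1(v),\phi_2(v))$ with values in $\{1,\ldots,\chi(G_1)\}\times\{1,\ldots,\chi(G_2)\}$, and your extension by isolated vertices colored $1$ reproduces exactly the paper's explicit case distinction $(\phi_1(x),1)$ and $(1,\phi_2(x))$ for vertices lying in only one of the two graphs.
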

\begin{proof}
	For $i \in \{1,2\}$, let $\phi_{i}: V(G_{i}) \longrightarrow \{1, 2, ..., \chi(G_{i})\}$ be a proper $\chi(G_{i})$-coloring of $G_{i}$. 
	Define $\psi$, the coloring of  $V(G_{1} \cup G_{2})$, as follows:
	\begin{align*}
	\psi(x) = \left\{ \begin{array}{cc} 
	(\phi_{1}(x), 1) & \hspace{5mm}  x \in V(G_{1}) \setminus V(G_{2});  \\
	(\phi_{1}(x), \phi_{2}(x)) & \hspace{5mm} x \in V(G_{1}) \cap V(G_{2}); \\
	(1, \phi_{2}(x)) & \hspace{5mm}  x \in V(G_{2}) \setminus V(G_{1}).\\
	\end{array} \right.
	\end{align*}
	We may easily verify that $\psi$ is a proper coloring of $ G_{1} \cup G_{2}$ with color-set $ \{1, 2, ..., \chi(G_{1})\} \times  \{1, 2, ..., \chi(G_{2})\}$. Consequently, it follows that $\chi(G_{1} \cup G_{2}) \leqslant \chi(G_{1}) \times \chi(G_{2})$.
\end{proof} 
\vspace{1.5mm}
\noindent A consequence of the previous lemma is that, if we partition the edge-set of a graph $G$ into $E_{1}, E_{2},...,E_{k}$, then bounding the chromatic number of all spanning subgraphs $G_{i}$ of $G$ with edge-set
$E_{i}$ gives an upper bound for the chromatic number of $G$.\medbreak

\noindent Given a graph $G=(V,E)$ whose vertex-set is $V= V_{1} \cup V_{2}$, we may easily verify that $\chi(G) \leqslant \chi(G_{1})+\chi(G_{2})$, where $G_{1}$ and  $G_{2}$ are the subgraphs of $G$ induced by $V_{1}$ and $V_{2}$ respectively.\\

\noindent Let $G$ be a graph and let $L=x_1 x_2...x_n$ be an enumeration of its vertices. An edge $x_{i}x_{j}$ of $G$ is said to be a \textit{jump with respect to $L$} if $|i-j| > 1$. Two jumps $e=x_{l}x_{m}$ and $e'=x_{p}x_{q}$ of $G$ with $l<m$ and $p<q$  are said to be  \textit{secant edges with respect to  $L$}  if one of the following cases holds:
	\begin{description}[itemindent=1.5mm, leftmargin=1.5mm,  itemsep=1.5mm]
		\item[(i)] $l<p<m<q$;
		\item[(ii)]  $p<l<q<m$.
	\end{description} 

\noindent Let $D$ be an orientation of $G$. Two arcs $a$ and $a'$ of $D$ are said to be \textit{secant arcs with respect to $L$} if their corresponding edges in $G$ are so. An arc $a=(x_{i},x_{j})$ is said to be \textit{forward} with respect to $L$ if $i <j$. Otherwise, it is called \textit{backward} with respect to $L$.\medbreak

\noindent  Considering a graph $G$ having a linear  ordering of its vertices without secant edges, it is obvious that the restriction $L'$ of $L$ to the vertices of a subgraph $H$ of $G$ is  an ordering of $V(H)$  with no secant edges also. In view of this observation, we are able to color properly each graph having a linear ordering of its vertices with no secant edges: 
\begin{lemma}\label{nosecant} 
	Suppose that a graph $G$ admits an enumeration $L$ of its vertices such that $G$ has no secant edges with respect to $L$, then  $G$ is $3$-colorable. \end{lemma}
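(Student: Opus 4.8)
The plan is to establish $3$-colorability through $2$-degeneracy, using the fact recalled at the start of Section~2 that every $d$-degenerate graph is $(d+1)$-colorable. Concretely, I would argue by induction on $|V(G)|$ that every graph admitting an enumeration $L$ with no secant edges has a vertex of degree at most $2$; since the absence of secant edges is inherited by induced subgraphs (as already observed just before the statement, the restriction $L'$ of $L$ to any subgraph is again secant-free), one may delete such a vertex, $3$-color the remaining graph by the induction hypothesis, and then assign to the deleted vertex one of the three colors left free by its at most two neighbors.

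The heart of the matter is therefore the following combinatorial claim: if $G$ has an enumeration $L = x_1 x_2 \cdots x_n$ with no secant edges, then $G$ contains a vertex of degree at most $2$. If $G$ has no jump at all, every edge joins consecutive vertices, so $G$ is a subgraph of the path $x_1 x_2 \cdots x_n$ and every vertex has degree at most $2$. Otherwise, among all jumps I would select one, say $x_l x_m$ with $l<m$, of minimum span $m-l$; being a jump, $m-l \geq 2$, so there is at least one \emph{interior} vertex $x_i$ with $l < i < m$.

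The key step is to show that such an interior vertex $x_i$ can only be adjacent to its immediate neighbors $x_{i-1}$ and $x_{i+1}$, whence $\deg(x_i) \leq 2$. Suppose $x_i$ is joined to some $x_j$ with $|i-j|\geq 2$, i.e.\ by a jump. If $j < l$ or $j > m$, then the pair $x_i x_j$, $x_l x_m$ realizes one of the interleaving patterns (namely $j<l<i<m$ or $l<i<m<j$), so the two arcs are secant, contradicting the hypothesis. If instead $l \leq j \leq m$, then the jump $x_i x_j$ lies inside $[l,m]$ and has span $|i-j| < m-l$, contradicting the minimality of $m-l$. Hence no such $x_j$ exists, and $x_i$ has degree at most $2$, as wanted.

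I expect the only delicate point to be the bookkeeping in this last case analysis, in particular checking the boundary situations $j=l$, $j=m$, $i=l+1$, $i=m-1$, where the relevant edge is consecutive rather than a jump and so is already accounted for among $\{x_{i-1},x_{i+1}\}$. Once the degree bound is secured, the degeneracy/induction scheme described above yields $3$-colorability with no further difficulty. Equivalently, one may note that placing $x_1,\dots,x_n$ on a line and drawing the jumps as pairwise non-crossing arcs exhibits $G$ as an outerplanar graph and invoke the classical $3$-colorability of such graphs; but the self-contained degeneracy argument above fits the paper's framework better.
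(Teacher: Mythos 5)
Your proposal is correct and takes essentially the same route as the paper: both arguments establish $2$-degeneracy by choosing a jump of minimum span and showing that a vertex strictly between its endpoints has all its neighbors among its two immediate predecessors/successors in $L$ (the paper fixes the specific vertex $x_{i+1}$ just after the lower end), and then conclude $3$-colorability from degeneracy. The only cosmetic difference is that the paper verifies the degree bound directly inside every subgraph $H$ via the restricted enumeration $L'$, whereas you package the identical argument as a vertex-deletion induction; these are interchangeable formulations of the same proof.
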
 
\begin{proof}
	Let $L= x_{1}x_{2}...x_{n}$ be an enumeration of $V(G)$ with no secant edges. To reach our goal, we  prove  that $G$ is 2-degenerate.  Let $H$ be a subgraph of $G$ and let $L'$ be the restriction of $L$ to the vertices of $H$.  If  $H$ has no jumps with respect to $L'$, then $H$ is simply a disjoint union of paths and thus $\Delta(H) \leq 2$. Otherwise, we consider  a jump $x_{i}x_{j}$  of $H$ such that $|i-j|$ is minimal. Without loss of generality, we assume that $i<j$. Note that $N_{H}(x_{i+1}) \subseteq \{ x_{i}, x_{i+2}\}$, since otherwise   we get either secant edges or a jump smaller than $x_{i}x_{j}$, a contradiction. Thus $d_{H}(x_{i+1}) \leq 2$. In view of what precedes, it follows that   $G$ is $2$-degenerate. Consequently, we get that $G$ is $3$-colorable. This terminates the proof.  \end{proof}
\noindent The previous lemma will be  a central tool for the demonstration of the main theorems of the next sections. Furthermore, it will be used to  reinforce the conjecture that will be established  on the improvement of the upper bound of the chromatic number of strong digraphs having no subdivisions of two-blocks cycles.
\section{On bounding $\chi(S$-$Forb(C(k_{1},k_{2})) \cap \cal H \cal P)$ and $\chi(S$-$Forb(B(k_1, k_2; k_3))$ \\ $\cap \cal H)$}\label{c2section1sub2}
\noindent Denoting by $\cal H \cal P$ the class of digraphs having a Hamiltonian directed path,  Lemma \ref{nosecant} enables us to answer Addario et al.'s  question \cite{2path} positively  for the case of digraphs in $S$-$Forb(C(k_{1},k_{2})) \cap \cal H \cal P$: 

\begin{theorem}\label{2cyclehamil} Let $k_{1}$ and $ k_{2}$ be positive integers such that $k_{1} \geq k_{2} \geq 2$ and let $D$ be a digraph in $S$-$Forb(C(k_{1},k_{2})) \cap \cal H \cal P$. Then the chromatic number of $D$ is at most  $3k_1$.
\end{theorem}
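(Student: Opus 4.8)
The plan is to use the Hamiltonian directed path as a linear order and, via the vertex-partition inequality $\chi(G)\le\sum_r \chi(G[V_r])$ recorded just after Lemma \ref{far}, to reduce the theorem to producing a partition of $V(D)$ into at most $k_1$ classes, each inducing a subgraph that is $3$-colorable by Lemma \ref{nosecant}. Concretely, fix a Hamiltonian directed path $L=x_1x_2\cdots x_n$ of $D$ and use $L$ as the enumeration in the definitions of forward/backward arcs and of secant edges. Since $L$ is a directed path, every arc $(x_i,x_{i+1})$ is present, so for all $a<b$ the Hamiltonian segment $x_a\to x_{a+1}\to\cdots\to x_b$ is a directed path of length $b-a$; this is the device that turns extra arcs into two-blocks cycles.

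First I would record the basic mechanism. If $(x_i,x_j)$ and $(x_p,x_q)$ are two forward arcs that are secant with $i<p<j<q$, then $x_i\to x_j\to x_{j+1}\to\cdots\to x_q$ and $x_i\to x_{i+1}\to\cdots\to x_p\to x_q$ are two internally disjoint $x_ix_q$-dipaths (disjoint precisely because $p<j$), hence form a two-blocks cycle $C(q-j+1,\,p-i+1)$ whose blocks both have length at least $2$. Completely analogous constructions, each splicing one or two extra arcs onto Hamiltonian segments, handle a backward arc crossing a forward arc and two crossing backward arcs; in every case a secant pair yields a two-blocks cycle whose two block lengths are controlled by how far the arcs reach beyond their crossing point. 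This is what converts \emph{secant arcs with large spans} into a forbidden subdivision of $C(k_1,k_2)$.

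Next I would define the partition. To each vertex $v$ I would assign a level $\ell(v)\in\{0,1,\dots,k_1-1\}$ measuring the length of a longest directed detour attachable at $v$ that could serve as the \emph{short block} of a two-blocks cycle anchored on a Hamiltonian segment, arranged so that the two endpoints of any secant pair of arcs necessarily receive different levels, while $C(k_1,k_2)$-freeness (using $k_1\ge k_2\ge 2$) caps the number of attainable levels at $k_1$. With such a level function, every class $V_r=\ell^{-1}(r)$ contains no secant pair of arcs, so the restriction of $L$ to $V_r$ witnesses that $G[V_r]$ has no secant edges; Lemma \ref{nosecant} then gives $\chi(G[V_r])\le 3$. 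Summing over the at most $k_1$ classes via the vertex-partition inequality yields $\chi(D)\le 3k_1$.

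The hard part is the middle step: pinning down a level function that simultaneously (i) strictly increases across every secant pair and (ii) never reaches $k_1$. The obstruction is that a single pair of short secant arcs produces only a small two-blocks cycle, which need not be a subdivision of $C(k_1,k_2)$; one must instead track a whole \emph{staircase} of mutually secant arcs and show that if the levels were to reach $k_1$, then the accumulated reach of this staircase, together with one sufficiently long Hamiltonian segment, assembles two internally disjoint dipaths of lengths at least $k_1$ and at least $k_2$, i.e.\ the forbidden $C(k_1,k_2)$. I expect the bulk of the work, together with all the casework on the directions of the arcs, to live in verifying this accumulation bound, after which the $3$-coloring and the summation follow immediately from Lemmas \ref{nosecant} and \ref{far}.
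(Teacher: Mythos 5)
Your skeleton coincides with the paper's proof --- partition $V(D)$ into at most $k_1$ classes, show each class has no secant arcs with respect to the induced subpath, apply Lemma \ref{nosecant} to $3$-color each class, and sum over classes --- but the one step you leave open is the entire content of the proof, and the route you sketch for it will not close. You propose a level function $\ell(v)\in\{0,\dots,k_1-1\}$ that ``strictly increases across every secant pair'' while being capped at $k_1$, to be certified by a staircase/accumulation argument; you give no construction, and indeed no such Gallai--Roy-style machinery is needed. The missing idea is much simpler and is purely positional: partition by index modulo $k_1$ along the Hamiltonian path. Writing $P=x_0,x_1,\dots,x_n$, set $V_i=\{x_{i+\alpha k_1}:\alpha\ge 0\}$ for $0\le i\le k_1-1$. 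Any two distinct vertices of the same class are at distance at least $k_1$ along $P$, so if $(x_l,x_m)$ and $(x_p,x_q)$ were secant arcs inside one class with, say, $l<p<m<q$ (both forward), then $P[x_l,x_p]$ and $P[x_m,x_q]$ each have length at least $k_1$, and the two internally disjoint $x_lx_q$-dipaths $(x_l,x_m)\cup P[x_m,x_q]$ and $P[x_l,x_p]\cup(x_p,x_q)$ form a subdivision of $C(k_1,k_1)$, hence of $C(k_1,k_2)$ since $k_1\ge k_2$ --- a contradiction; the remaining orientation cases are symmetric. Lemma \ref{nosecant} then gives $\chi(D_i)\le 3$ for each class and $\chi(D)\le 3k_1$.

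Note why this dissolves the obstruction you correctly identified: you worried that a single pair of short secant arcs yields only a small two-blocks cycle that need not be a subdivision of $C(k_1,k_2)$. With the modular partition there simply are no short secant pairs within a class --- the required spacing is built into the partition itself, so \emph{every} secant pair inside a class is automatically long enough, and no global monotonicity condition on levels, no tracking of chains of mutually secant arcs, and no accumulation bound is required. Your first paragraph's splicing mechanism is exactly right and is what the paper uses; the defect is solely that you treated the partition as an unknown to be reverse-engineered from the secant structure, when a fixed, explicit partition by residue of position does all the work.
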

\begin{proof}
	Set $P= x_{0}, x_{1}, ... ,x_{n}$ be a Hamiltonian directed path of $D$.  For $0 \leq i \leq k_1-1$, we define  $V_{i}=\{x_{i+ \alpha . k_1};$ $ \alpha \geq 0\}$. Let $D_{i}$ be the subdigraph of $D$ induced by $V_{i}$ and  let $P_{i}=x_{i_{1}}, x_{i_{2}},...,x_{i_{t}}$ be the restriction of $P$ on the vertices of  $D_{i}$  with $i_{1}<i_{2}<...<i_{t}$. Clearly, $\{V(D_0),V(D_1), ...., V(D_{k_1-1}) \}$ forms a partition of $V(D)$. Thus once we  color  each $D_{i}$ properly  by   $\chi(D_{i})$ colors, we  obtain a proper coloring of the whole digraph  using  $k_1. \chi(D_{i})$ distinct colors.  This means that, to  reach the desired goal,  it suffices to bound from above  the chromatic number of each $D_i$.  \begin{claim}\label{dinosecant}
		For all $0 \leq i \leq k_1-1$, 	$D_{i}$ has no secant arcs with respect to $P_{i}$. \end{claim} \noindent \sl {Subproof.} \upshape Assume the contrary is true and let $a=(x_{l},x_{m})$ and $a'=(x_{p},x_{q})$ be two secant arcs of  $D_{i}$ with respect to $P_{i}$. Here  there are four cases to  consider: $a$ and $a'$ are both forward or both backward, $a$ is forward and $a'$ is backward, or $a$ is backward and $a'$ is forward.   Assume first that $a$ and $a'$ are both forward, that is, $l < m$ and $p<q$. By symmetry, we  can assume  that $l<p<m<q$. Due to the definition of $D_{i}$, it is easy to see that $P[x_{l},x_{p}]$ and $P[x_{m},x_{q}]$ are of length at least $k_1-1$. Consequently, the union of $(x_{l},x_{m}) \cup P[x_{m},x_{q}]$ and $P[x_{l},x_{p}] \cup (x_{p},x_{q})$ forms a subdivision of $C(k_1,k_1)$ and so a subdivision of $C(k_1,k_2)$, a contradiction. The proof of the remaining cases is  similar to that of the previous one.  $\hfill  \lozenge$ \medbreak 
	\noindent According to  Claim \ref{dinosecant} and Lemma \ref{nosecant}, it follows that $\chi(D_{i}) \leq 3$ for all $0 \leq i \leq k_1-1$ and thus $\chi(D) \leq 3k_1$. This ends the proof. 
\end{proof}

\noindent As a consequence of the above theorem, we are able to find an upper bound for  the chromatic number of a  $C(k_{1},k_{2})$-subdivision-free digraph in terms of  the size of its maximal stable set:

\begin{corollary}	Let $k_{1} $ and $ k_{2}$ be two positive integers  and let $D$ be a  $C(k_{1},k_{2})$-subdivision-free digraph. Then the chromatic number of $D$  is at most $3 . \alpha(D).k$, where $k=max$ $\{k_1,k_2\}$.
\end{corollary}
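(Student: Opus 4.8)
The plan is to reduce the general (not necessarily Hamiltonian) situation to Theorem~\ref{2cyclehamil} by covering $V(D)$ with few directed paths and colouring the pieces one at a time. Relabel the parameters so that $k_1 \ge k_2$, whence $k = k_1$. First I would invoke the classical Gallai--Milgram theorem, which asserts that the vertices of any digraph can be partitioned into at most $\alpha(D)$ pairwise vertex-disjoint directed paths; write such a partition as $P_1, \dots, P_r$ with $r \le \alpha(D)$.

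Next, for each $j$ set $D_j = D[V(P_j)]$, the subdigraph induced by the vertices of $P_j$. Two observations place each $D_j$ under the hypotheses of Theorem~\ref{2cyclehamil}. On one hand, $P_j$ is a directed path running through all of $V(D_j)$, so it is a Hamiltonian directed path of $D_j$ and hence $D_j \in \mathcal{H}\mathcal{P}$. On the other hand, since $D_j$ is a subdigraph of $D$, any subdivision of $C(k_1,k_2)$ sitting inside $D_j$ would also be a subdivision of $C(k_1,k_2)$ inside $D$; as $D$ is $C(k_1,k_2)$-subdivision-free, so is $D_j$. Therefore $D_j \in S$-$Forb(C(k_1,k_2)) \cap \mathcal{H}\mathcal{P}$, and Theorem~\ref{2cyclehamil} gives $\chi(D_j) \le 3k_1 = 3k$.

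Finally I would assemble the colourings. Since $\{V(D_1), \dots, V(D_r)\}$ is a partition of $V(D)$, iterating the subadditivity of the chromatic number over a vertex split (the inequality $\chi(G)\le \chi(G_1)+\chi(G_2)$ recorded in the preliminaries) yields $\chi(D) \le \sum_{j=1}^{r} \chi(D_j) \le 3kr \le 3\,\alpha(D)\,k$, which is exactly the claimed bound.

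The argument is short, and its only genuine idea is that the Gallai--Milgram theorem converts the independence number $\alpha(D)$ into a bound on the number of directed paths needed to cover $D$, each of which drops us precisely into the Hamiltonian-directed-path regime already controlled by Theorem~\ref{2cyclehamil}; everything else (heredity of the subdivision-free property and additivity of $\chi$ across a vertex partition) is routine. The one technical point to watch is the hypothesis $k_1 \ge k_2 \ge 2$ of that theorem: after setting $k=\max\{k_1,k_2\}$ one should either assume $\min\{k_1,k_2\}\ge 2$ or treat the degenerate case $\min\{k_1,k_2\}=1$ separately before running the path-partition argument.
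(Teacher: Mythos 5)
Your proposal is correct and follows essentially the same route as the paper's own proof: both invoke the Gallai--Milgram theorem \cite{milgram} to partition $V(D)$ into at most $\alpha(D)$ vertex-disjoint directed paths, observe that each induced subdigraph inherits $C(k_1,k_2)$-subdivision-freeness and has the covering path as a Hamiltonian directed path, apply Theorem~\ref{2cyclehamil} to color each piece with $3k$ colors, and assign fresh colors across the pieces of the partition. Your closing caveat about the hypothesis $k_1 \geq k_2 \geq 2$ of Theorem~\ref{2cyclehamil} is a fair point that the paper itself silently glosses over, since the corollary as stated allows $\min\{k_1,k_2\}=1$.
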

\begin{proof}	Due to Gallai-Milgram theorem \cite{milgram}, the vertices of $D$ can be partitioned into  $\alpha(D)$ vertex-disjoint directed paths, where $\alpha(D)$ is the size of a maximal stable set of $D$. Let $P_{1}, P_{2}, ..., P_{\alpha(D)}$  be the disjoint directed paths  covering  $V(D)$  and let  $D_{i}$ be the subdigraph of $D$ induced by $V(P_{i})$ for all $i \in [\alpha(D)]$. It is easy to verify that  $D_{i}$ contains no subdivisions of $C(k_1,k_2)$ and $P_{i}$ is a Hamiltonian directed path of $D_{i}$. Thus Theorem   \ref{2cyclehamil} induces a proper $3k$-coloring of $D_i$. Consequently, by assigning  $3k$ distinct colors to each $D_i$, the required result follows.\end{proof}

\noindent Another important performance of both the notion of secant edges and Lemma \ref{nosecant}  appears in affirming Conjecture \ref{bispindle} for  the class  of Hamiltonian digraphs:
\begin{theorem}\label{bispindlehamil}  Let $D$ be a digraph in $S$-$Forb(B(k_1, k_2; k_3)) \cap \cal H$, where $k_1, k_2$ and $k_3$ are three  positive integers.  Then the chromatic number of $D$ is  at most $4k$, with $k=max$ $\{k_1, k_2, k_3\}$. \end{theorem}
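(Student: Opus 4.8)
The plan is to mirror the strategy of Theorem~\ref{2cyclehamil}, replacing the Hamiltonian directed path by the full Hamiltonian directed cycle, whose ``extra'' arc is precisely what supplies the $yx$-dipath needed to close up a bispindle. Concretely, let $C=x_0,x_1,\dots,x_{n-1},x_0$ be a Hamiltonian directed cycle of $D$, write $P=x_0,x_1,\dots,x_{n-1}$ for the directed path obtained by deleting the arc $(x_{n-1},x_0)$, and set $k=\max\{k_1,k_2,k_3\}$. For $0\le i\le k-1$ put $V_i=\{x_{i+\alpha k}:\alpha\ge 0\}$, let $D_i$ be the subdigraph induced by $V_i$, and let $P_i$ be the restriction of $P$ to $V(D_i)$. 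Since $\{V_0,\dots,V_{k-1}\}$ partitions $V(D)$ into $k$ classes, assigning a private palette to each $D_i$ produces a proper coloring of $D$ using at most $k\cdot\max_i\chi(D_i)$ colors, so it suffices to prove that every $D_i$ is $4$-colorable.

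The heart of the argument is a secant-arc claim analogous to Claim~\ref{dinosecant}. I would show that two secant arcs of $D_i$ (away from the ends of $P_i$) force a subdivision of $B(k_1,k_2;k_3)$. Take two secant \emph{forward} arcs $(x_l,x_m)$ and $(x_p,x_q)$ with $l<p<m<q$; then $(x_l,x_m)\cup P[x_m,x_q]$ and $P[x_l,x_p]\cup(x_p,x_q)$ are two internally disjoint $x_lx_q$-dipaths, exactly as in Theorem~\ref{2cyclehamil}. Because any two vertices of $V_i$ differ in index by at least $k$, their lengths $1+(q-m)$ and $(p-l)+1$ are both at least $k+1>k\ge\max\{k_1,k_2\}$, so the length requirements on the two $xy$-dipaths of $B(k_1,k_2;k_3)$ are met automatically, for either assignment of $k_1$ and $k_2$. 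What is new here, relative to the two-blocks cycle, is that a bispindle also demands a $yx$-dipath, and this is exactly what Hamiltonicity provides: the complementary portion $P[x_q,x_{n-1}]\cup(x_{n-1},x_0)\cup P[x_0,x_l]$ of $C$ is an $x_qx_l$-dipath whose internal vertices lie outside $[x_l,x_q]$ and hence avoid both $xy$-dipaths. Whenever this return dipath has length at least $k_3$, the three dipaths form a forbidden subdivision of $B(k_1,k_2;k_3)$, a contradiction; the remaining orientations of the secant pair would be treated in the same way, partly by passing to the reverse digraph, which is again Hamiltonian and $B(k_1,k_2;k_3)$-free.

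The main obstacle is exactly the proviso ``whenever the return dipath has length at least $k_3$''. The complementary cycle-portion has length $n-(q-l)$, which can fall below $k_3$ only when the two $xy$-dipaths nearly exhaust $C$. I would argue that this forces $x_l$ and $x_q$ to lie within the first, respectively the last, $k_3\le k$ vertices of $P$; since consecutive members of $V_i$ are $k$ apart, this pins the offending secant pairs to an extremal vertex of $V_i$, and when the short cycle-portion fails one still searches for an alternative $x_qx_l$-dipath routed through the wrap-around region (such detours genuinely exist, since backward chords can participate in $xy$-dipaths). Consequently, after discarding a single extremal vertex, $D_i$ has no secant arcs with respect to $P_i$, so by the $2$-degeneracy established in the proof of Lemma~\ref{nosecant} it is $3$-colorable; reinstating that vertex raises the degeneracy by at most one and yields $\chi(D_i)\le 4$, whence $\chi(D)\le 4k$. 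The delicate steps in pinning the constant down to exactly $4$ are the complete case analysis of the secant orientations and a careful accounting at the seam where $x_{n-1}$ meets $x_0$, so as to guarantee that only one extra color per class is ever required.
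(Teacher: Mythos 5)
Your proposal is correct and follows essentially the same route as the paper's proof: the paper likewise partitions $V(D)$ into the $k$ residue classes along the Hamiltonian cycle, deletes the single extremal vertex $x_i$ from each class $G_i$ (so every remaining index is at least $k$, which makes the wrap-around segment $C[x_q,x_n]\cup(x_n,x_0)\cup C[x_0,x_l]$ automatically long enough to serve as the $yx$-dipath), proves the resulting $G'_i$ has no secant edges via the four-way case analysis on chord orientations, and concludes $\chi(G_i)\le\chi(G'_i)+1\le 4$, hence $\chi(D)\le 4k$. Your aside about hunting for an alternative return dipath is unnecessary once the extremal vertex is discarded, and the mixed-orientation cases you defer are settled in the paper by the same explicit wrap-around constructions you describe for the forward--forward case.
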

\begin{proof} Let $k=max$ $ \{k_1, k_2, k_3\}$ and let $C=x_0, x_1,..., x_n, x_0$ be a Hamiltonian directed cycle of $D$. Let $L=x_0x_1...x_n$ be a linear ordering of the vertices of  the underlying graph $G$ of $D$ induced by $C$.
	For every $ 0 \leq i \leq k-1$, we set $G_i=G[\{x_{i+\alpha k};$ $ \alpha=0,1,2,3,... \}]$ and $G'_i=G_i \setminus \{x_i\}$. Let $L'_i$ be the ordering of $G'_i$ obtained by the restriction of $L$ on the vertices of $G'_i$.  \begin{claim}\label{nosecantdi} For all  $ 0 \leq i \leq k-1$, $G'_i$ has no secant edges with respect to $L'_i$. \end{claim} 
	\noindent \sl {Subproof.}  \upshape  Assume to the contrary that  $x_lx_m$ and $x_px_q$ are two secant edges  with respect to $L'_i$. Without loss of generality, assume that  $l<p<m<q$. Due to the definition of $G'_i$, note that   $l\geq k$, $p-l\geq k$, $m-p\geq k$ and $q-m\geq k$. 	To reach a contradiction to our assumption, we consider the  possible orientations of $x_lx_m$ and $x_px_q$. Assume first that  $(x_l,x_m)\in E(D)$.  If $(x_p,x_q) \in E(D)$, then the union of  $C[x_l,x_p] \cup (x_p,x_q)$,   $(x_l,x_m) \cup C[x_m,x_q]$ and $C[x_q,x_n] \cup (x_n,x_0) \cup C[x_0,x_l]$ forms a subdivision of  $B(k, k; k)$ and so  a subdivision of $B(k_1,k_2; k_3)$, a contradiction. Else if $(x_q,x_p) \in E(D)$, then the three internally disjoint directed paths $(x_q,x_p) \cup C[x_p,x_m]$, $C[x_q,x_n] \cup (x_n,x_0)  \cup C[x_0, x_l] \cup (x_l,x_m)$ and $C[x_m,x_q]$ form a subdivision of  $B(k, k; k)$ and so a subdivision of  $B(k_1, k_2; k_3)$, a contradiction. Thus $(x_l,x_m) \notin E(D)$ and hence $(x_m,x_l) \in E(D)$. If $(x_p,x_q) \in  E(D)$, then the three internally disjoint directed paths $C[x_p,x_m] \cup (x_m,x_l)$, $(x_p,x_q) \cup C[x_q,x_n] \cup (x_n,x_0) \cup C[x_0, x_l]$ and $C[x_l, x_p]$ form a subdivision of $B(k, k; k)$ and so a subdivision of $B(k_1, k_2; k_3)$. This  contradicts the fact that $D$ contains no subdivisions of $B(k_1, k_2; k_3)$ and implies that  $(x_q,x_p)\in E(D)$. But the  three internally disjoint directed paths $(x_m,x_l) \cup C[x_{l},x_p]$, $C[x_m,x_q] \cup( x_q,x_p)$ and $C[x_p,x_m]$ form a subdivision of $B(k_1, k_2; k_3)$,  a contradiction to our initial assumption.  This proves that $G'_i$ has no secant edges with respect to $L'_i$. $\hfill  \lozenge$ \medbreak
	\noindent	Due to Lemma \ref{nosecant} together with Claim \ref{nosecantdi}, we get   that $\chi (G'_i)\leq 3$. But $\chi (G_i)\leq \chi (G'_i) + 1$, then $\chi (G_i)\leq 4$ and so $ \chi (G)\leq \sum _{i=0}^{k-1} \chi (G_i)\leq  4k$. By considering  the fact that $\chi (D)=\chi(G)$, the required result follows.  \end{proof}
\noindent  The bound given in Theorem \ref{bispindlehamil} is certainly not the best possible. However, a better  bound is provided for the digraphs in $S$-$Forb(B(k_1, 1; k_3)) \cap \cal H$:

\begin{proposition}  Let $D$ be a digraph  in $S$-$Forb(B(k_1, 1; k_3)) \cap \cal H$, where $k_1$ and $ k_3$ are arbitrary positive integers. Then the    chromatic number of $D$ is at most  $2k-1$, with   $k=max$ $\{k_1, k_3\}$. 
\end{proposition}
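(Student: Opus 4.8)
The plan is to follow the skeleton of the proof of Theorem~\ref{bispindlehamil}, but to cash in the hypothesis $k_2=1$ in order to replace the per-class bound $4$ by a single global degree estimate. Fix a Hamiltonian directed cycle $C=x_0,x_1,\dots,x_n,x_0$ of $D$, write $N=n+1$ for its length, and let $G$ be the underlying graph of $D$. For two vertices $u,v$ let $C[u,v]$ denote the sub-dipath of $C$ that runs from $u$ to $v$ in the direction of $C$; its length is the forward cyclic distance from $u$ to $v$. Instead of splitting $V(D)$ into $k$ colour classes, I would bound the maximum degree of $G$ directly and then invoke the elementary remark of the preliminaries that a $d$-degenerate graph is $(d+1)$-colorable.

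The heart of the argument is the claim that \emph{every arc of $D$ joins two vertices whose cyclic distance along $C$ is at most $k-1$}. Indeed, take any arc, say from $u$ to $v$, and set $d_1=$ length of $C[u,v]$ and $d_2=$ length of $C[v,u]$, so that $d_1+d_2=N$. Because $k_2=1$, the single arc $(u,v)$ can always serve as the length-$1$ branch: together with the two cycle dipaths it yields the $(2+1)$-bispindle $B(d_1,1;d_2)$ with tail $u$ and head $v$, the three dipaths $C[u,v]$, $(u,v)$ and $C[v,u]$ being pairwise internally disjoint since the arc has no internal vertex and the two cycle dipaths meet only at $u$ and $v$. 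If the cyclic distance $\min(d_1,d_2)$ were at least $k$, then $d_1\ge k\ge k_1$ and $d_2\ge k\ge k_3$, so $B(d_1,1;d_2)$ would be a subdivision of $B(k_1,1;k_3)$, contradicting $D\in S$-$Forb(B(k_1,1;k_3))$. (Note that $d_1\ge k\ge 2$ forces the arc to be a genuine chord, so $C[u,v]$ is really a second, strictly longer dipath, and the triple is an honest bispindle.) This case analysis is considerably shorter than the one in Theorem~\ref{bispindlehamil}, precisely because the length-$1$ branch is fixed and one never has to weigh the orientation of a second chord.

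Granting the claim, each vertex $x_j$ can be adjacent in $G$ only to vertices within cyclic distance $k-1$ of it, that is, to a subset of $\{x_{j\pm1},\dots,x_{j\pm(k-1)}\}$ (indices read modulo $N$); hence $\deg_G(x_j)\le 2(k-1)$. Since degrees only drop when passing to a subgraph, $\Delta(G)\le 2(k-1)$ shows that $G$ is $(2k-2)$-degenerate, and therefore $\chi(G)\le 2k-1$. As $\chi(D)=\chi(G)$, the bound follows. The \emph{only} delicate point I expect is the verification that $\{C[u,v],(u,v),C[v,u]\}$ is genuinely a subdivision of $B(k_1,1;k_3)$ — namely internal disjointness together with the length inequalities $d_1\ge k_1$, $1\ge 1$ and $d_2\ge k_3$; once this is in hand, the coloring is immediate from the degree bound, with no appeal to Lemma~\ref{nosecant} needed.
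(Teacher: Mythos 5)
Your proof is correct and takes essentially the same approach as the paper: the paper likewise uses the arc itself as the length-$1$ branch together with the two segments of the Hamiltonian cycle to show that every neighbour of a vertex lies within cyclic distance $k-1$, deduces that the maximum degree is at most $2k-2$, and concludes $\chi(D)\le 2k-1$. The only cosmetic differences are that your per-arc formulation absorbs the paper's separate small case $n\le 2k-1$ (there $\min(d_1,d_2)\le \lfloor N/2\rfloor \le k-1$ holds automatically), and that the implicit assumption $k\ge 2$, which you flag to ensure the arc is a genuine chord, is equally implicit in the paper's argument.
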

\begin{proof}  Let $C=x_0, x_1, x_2,..., x_{n-1}, x_0$  be a Hamiltonian directed cycle of $D$ and let  $x$ be a vertex of $D$.  Without loss of generality, we may assume that $x=x_0$. If $n-k\leq k-1$, then $n\leq 2k-1$ and thus $\chi(D)\leq n\leq 2k-1$. So we may suppose that $n-k\geq k$. Assume that there exists an integer $t$ such that $ k\leq t \leq n-k$ and $x_t\in N(x)$. If $(x_0,x_t)\in D$, then the union of the three internally disjoint directed paths $C[x_0,x_t]$, $(x_0,x_t)$ and $C[x_t,x_0]$ is a subdivision of $B(k_1, 1; k_3)$, a contradiction. Thus $(x_t,x_0)\in D$ and so  the three internally disjoint directed paths $C[x_t,x_0]$, $(x_t,x_0)$ and $C[x_0,x_t]$ form a subdivision of $B(k_1, 1; k_3)$, a contradiction. This gives that $N(x) \subseteq \{x_1,x_2..., x_{k-1},x_{n-1}, x_{n-2},...,x_{n-(k-1)}\}$. Whence, the maximum degree of $D$ is at most $2k-2$ and so the chromatic number  of $D$ is at most $2k-1$. \end{proof} 
\section{On the  improvement of  $\chi(S$-$Forb(C(k_{1},k_{2})) \cap \cal S)$}\label{chapter2section2}
This section is devoted to introduce an elegant approach which is supposed to be contributory in improving the chromatic number of digraphs in $S$-$Forb(C(k_{1},k_{2})) \cap \cal S$  from $12k^2$ to $4k^2$, given that $k= max  \{k_1,k_2\}$.  Once our  approach is completely verified, it leads not just to ameliorate the best known upper bound for the chromatic number of strong digraphs not containing two-blocks cycles,  but to prove a more general statement. This is because we are treating the  problem of the existence of two-blocks cycles in  the class of digraphs having a spanning out-tree, that includes the strong ones.\\

\noindent  At first, we start with some basic definitions, standard notations  and  preliminary results  that will be essential for the  coming proofs. \medbreak

\noindent Among the most effective tools that play a major role in constructing our proofs are the trees.  Let $G$ be a graph with a spanning tree  $T$ rooted at  $r$. For a vertex $x$ of $G$, there is a unique $rx$-path in $T$, denoted by $T[r,x]$. The \textit{level} of $x$ with respect to $T$, denoted by $l_{T}(x)$, is the length of this path. The \textit{ancestors} of $x$ are the vertices that belong to $T[r,x]$. For an ancestor $y$ of $x$, we write $y \leqslant_{T} x$. Conversely, we denote by $S(x)$ the set of the vertices $v$ of $G$ such that  $x$ is an ancestor of $v$.  The subtree of $T$ rooted at $x$ and induced by $S(x)$ is denoted by $T_{x}$. An ancestor $y$ of $x$ is said to be its \textit{predecessor} if $l_{T}(y)=l_{T}(x)-1$.  Two leaves of $T$ are said to be \textit{sisters} if they share the same predecessor in $T$.  We say that $T$ is  \textit{normal} in $G$ if for every edge $xy$ of $G$ either $x\leqslant_{T} y$ or vice versa. It is well known that every connected graph has a normal spanning tree with any preassigned root.\medbreak

\noindent Because we are concerned in the study of digraphs rather than graphs, similar definitions are introduced for oriented trees. Recall that an \textit{out-tree} is an oriented tree in which all the  vertices have in-degree at most 1.  Given a digraph $D$ having a spanning  out-tree $T$ with source $r$, the \textit{level} of a vertex $x$  with respect to $T$, denoted by $l_{T}(x)$, is the length of the unique $rx$-dipath in $T$. For a non-negative integer $i$, we define $L_{i}(T)=\{x \in V(T);$ $ l_{T}(x)=i\}$.  For a vertex $x$ of $D$, the \textit{ancestors} of $x$ are the vertices that belong to $T[r,x]$. For two vertices $x_{1}$ and $x_{2}$ of $D$, the \textit{least common ancestor} $y$ of $x_{1}$ and $x_{2}$  is the common ancestor of $x_{1}$ and $x_{2}$  having the highest level in $T$. Note that the latter notion is well-defined since $r$ is a common ancestor of all the vertices of $D$. An arc $(x,y)$ of $D$ is said to be forward with respect to $T$ if $l_{T}(x) < l_{T}(y)$, otherwise it is called a backward arc.  We say that $T$ is a \textit{maximal out-tree} of $D$, if $y$ is an ancestor of $x$ for every backward arc $(x,y)$ of $D$.\medbreak
\noindent  The next proposition shows an interesting structural property on   digraphs having a spanning out-tree:
\begin{proposition}\label{yarab}
	Given a digraph $D$ having a  spanning out-tree $T$, then $D$ contains a maximal out-tree.
\end{proposition}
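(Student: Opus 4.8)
The plan is to produce a maximal out-tree by an extremal (exchange) argument rather than by a direct construction. Since $D$ is finite it possesses only finitely many spanning out-trees; among all of them I would fix one spanning out-tree $T$ with source $r$ that maximizes the total level $\Phi(T) = \sum_{x \in V(D)} l_{T}(x)$. The claim I will prove is that this extremal $T$ is automatically maximal in the sense required.

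Suppose, for contradiction, that $T$ fails to be maximal. Then there is a backward arc $(x,y)$ of $D$ — so $l_{T}(x) \geq l_{T}(y)$ — for which $y$ is \emph{not} an ancestor of $x$. Since $D$ is simple we have $x \neq y$, and since $r$ is an ancestor of every vertex, $y = r$ would force $y$ to be an ancestor of $x$; hence $y \neq r$ and $y$ has a unique parent $q$ in $T$, with $(q,y) \in E(T)$. The move I would make is the obvious re-parenting: delete the tree arc $(q,y)$ and insert the arc $(x,y)$, thereby hanging $y$, together with the whole subtree rooted at $y$, underneath $x$. Call the resulting oriented subgraph $T'$.

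The step that needs genuine care — and the only one that does — is checking that $T'$ is again a spanning out-tree. Deleting $(q,y)$ splits $T$ into the subtree rooted at $y$, whose vertex set is $S(y)$, and the remaining part $R$. The hypothesis that $y$ is not an ancestor of $x$ says exactly that $x \notin S(y)$, so $x$ lies in $R$; consequently inserting $(x,y)$ reconnects $R$ to $S(y)$ without creating a cycle. As $T'$ has the same vertices and the same number of arcs as $T$, is connected, keeps every in-degree equal to $1$ except at $r$ (the in-degree of $y$ is still $1$, now supplied by $x$, and $r$ remains the unique source), it is indeed a spanning out-tree of $D$. This is where the ``not an ancestor'' hypothesis is essential; everything else is bookkeeping.

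Finally I compare potentials. Re-parenting gives $l_{T'}(y) = l_{T}(x) + 1$, and since $(x,y)$ is backward, $l_{T}(x) + 1 \geq l_{T}(y) + 1 > l_{T}(y)$. Every vertex of $S(y)$ has its level raised by the same positive amount $(l_{T}(x)+1) - l_{T}(y)$, while all vertices outside $S(y)$ retain their levels, so $\Phi(T') > \Phi(T)$. This contradicts the maximality of $T$, and therefore no violating backward arc can exist, i.e.\ every backward arc $(x,y)$ of $D$ has $y$ as an ancestor of $x$. Thus the level-maximizing $T$ is a maximal out-tree, as desired.
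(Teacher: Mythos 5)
Your proof is correct and is essentially the paper's argument recast extremally: the paper iterates the very same re-parenting move (delete the tree arc with head $y$, add the backward arc $(x,y)$ whose head $y$ is not an ancestor of $x$) and terminates because every level weakly increases while $l(y)$ strictly increases, whereas you take the spanning out-tree maximizing $\Phi(T)=\sum_{v} l_{T}(v)$ and show the same move would increase $\Phi$, a contradiction. A minor merit of your write-up is that you verify explicitly, via $x \notin S(y)$, that the exchange again yields a spanning out-tree --- a point the paper asserts without proof.
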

\begin{proof}
	Initially, set $T_{0}:=T$. If $T_{0}$ is maximal, there is nothing to do.  Otherwise there is an arc $(x,y)$ of $D$ which is backward with respect to $T_{0}$ such that $y$ is not ancestor of $x$.  Let $T_{1}$ be the out-tree obtained from $T_{0}$ by adding  $(x,y)$  to $T_{0}$, and deleting the arc of head $y$ in $T_{0}$. We can easily see that the level of each vertex in $T_{1}$ is at least its level in $T_{0}$, and there exists a vertex ($y$) whose level strictly increases. Since the level of a vertex cannot increase infinitely, we can see that after a finite number of repeating the above process  we reach an out-tree  which is maximal.
\end{proof}
\noindent As a generalization of the notion of secant edges defined with respect to an enumeration of the vertices of a given graph $G$, we develop this concept as  follows: Given that $P=x_{1}, x_{2}, ..., x_{n}$ is a path of  $G$, two edges  $e$ and $e'$  of $G$ are said to be  \textit{secant edges with respect to $P$} if they are secant with respect to the enumeration induced by $P$. However for  a normal spanning tree $T$  of $G$ rooted at $r$, two edges $e$ and $e'$ of $G$  are said to be \textit{secant edges  with respect to $T$} if $e$ and $e'$ are secant edges with respect to a path $P$ in $T$ starting at $r$.\medbreak

\noindent In the light of what Lemma \ref{nosecant}  induces on the coloring of graphs admitting a linear ordering of their vertices without secant edges, we get a proper coloring for each graph having a Hamiltonian path with no secant edges: 

\begin{proposition}\label{Tpath} Let $G$ be a graph having a Hamiltonian path $P$  such that $G$ has no secant edges with respect to $P$, then the chromatic number of $G$ is at most $3$. \end{proposition}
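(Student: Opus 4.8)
The plan is to observe that Proposition~\ref{Tpath} is essentially an immediate consequence of Lemma~\ref{nosecant}, because a Hamiltonian path provides exactly the kind of vertex enumeration that the lemma requires. The key point is that a Hamiltonian path $P = x_{1}, x_{2}, \ldots, x_{n}$ of $G$ visits every vertex of $G$ exactly once, so the order in which its vertices appear along $P$ is an enumeration $L = x_{1}x_{2}\ldots x_{n}$ of the \emph{entire} vertex set $V(G)$, not merely a subset of it. This is the only place where Hamiltonicity is genuinely used.

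First I would invoke the definition stated just before the proposition: two edges are secant with respect to the path $P$ precisely when they are secant with respect to the enumeration induced by $P$. Consequently the hypothesis that $G$ has no secant edges with respect to $P$ is, word for word, the statement that $G$ has no secant edges with respect to the enumeration $L$. This matches the hypothesis of Lemma~\ref{nosecant} exactly.

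Then I would simply apply Lemma~\ref{nosecant} to the enumeration $L$. Since $G$ admits an enumeration of its vertices with no secant edges, the lemma yields that $G$ is $3$-colorable, that is, $\chi(G) \leq 3$, which is the desired conclusion.

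There is no real obstacle here; the substantive work — the $2$-degeneracy argument showing that an enumeration with no secant edges forces $3$-colorability — has already been done in the proof of Lemma~\ref{nosecant}, so nothing beyond a direct citation is required. The only thing to be careful about is the bookkeeping that a Hamiltonian path genuinely enumerates all of $V(G)$, so that $L$ is a legitimate enumeration of the whole graph rather than of a proper subgraph; this is precisely what the Hamiltonicity of $P$ guarantees.
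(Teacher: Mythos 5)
Your proposal is correct and matches the paper exactly: the paper states Proposition~\ref{Tpath} without a separate proof, presenting it as an immediate consequence of Lemma~\ref{nosecant} since the Hamiltonian path induces an enumeration of all of $V(G)$ with no secant edges. Your only additional content --- the bookkeeping that Hamiltonicity makes $L$ an enumeration of the whole vertex set --- is precisely the implicit justification the paper relies on.
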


\noindent Since  Hamiltonian  paths   can be viewed as  special types of  normal spanning trees, a question that naturally arises here is, can similar result be established for any normal spanning tree?   Many endeavors and efforts  made to answer this question enables us  to predict that the following is true: 

\begin{conjecture}\label{4colorable} If $G$ is a graph with a normal spanning tree $T$ such that $G$ has no secant edges with respect to $T$, then the chromatic number of $G$ is at most $4$. \end{conjecture}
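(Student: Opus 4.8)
The plan is to generalize the degeneracy argument from Lemma~\ref{nosecant} to the tree setting. Recall that in the Hamiltonian-path case, the key was an enumeration $L$ in which the absence of secant edges forced $G$ to be $2$-degenerate, yielding $3$-colorability. For a normal spanning tree $T$ rooted at $r$, the analogue should be that the absence of secant edges (with respect to every root-path in $T$) forces $G$ to be $3$-degenerate, which by the standard argument gives $4$-colorability. So my goal reduces to proving the purely combinatorial statement: \emph{every subgraph $H$ of $G$ contains a vertex of degree at most $3$ in $H$.}

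First I would recall the two structural facts that normality and the no-secant-edges hypothesis give us. Normality means every edge of $G$ joins an ancestor to a descendant, so for any vertex $x$, all its neighbors lie on the union of the root-path $T[r,x]$ (its ancestors) and the subtree $T_x$ (its descendants). The no-secant hypothesis is applied along a single root-path: on any path $P$ in $T$ starting at $r$, the edges of $G$ whose endpoints both lie on $P$ cannot cross in the sense of cases (i)--(ii). The heart of the matter is to exploit these along a carefully chosen vertex. The natural candidate, mirroring the proof of Lemma~\ref{nosecant}, is to take a subgraph $H$, restrict $T$ to $V(H)$, and look at a ``lowest'' jump or a deepest leaf of the induced tree structure; one then argues that an appropriately chosen vertex (analogous to $x_{i+1}$ in Lemma~\ref{nosecant}) can have only a bounded number of neighbors in $H$.

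I would set up the degeneracy argument as follows. Fix a subgraph $H$ and consider the forest $T$ restricted to $V(H)$ together with the ancestor relation inherited from $T$. Pick a vertex $v \in V(H)$ of maximum level $l_T$ among those involved in some ``jump'' (an edge of $H$ that is not a parent-edge), breaking ties to make the configuration extremal. By normality, the neighbors of $v$ in $H$ split into descendants (which, by maximality of the level, are controlled) and ancestors lying on $T[r,v]$. The no-secant condition applied to the root-path through $v$ should force that among the ancestors of $v$, the neighbors form a structure that cannot branch: analogously to the $3$-colorable case where each vertex had neighbors only among its two path-neighbors, here each vertex should have at most a bounded number of ancestor-neighbors once secant edges are forbidden. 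Combining the descendant side (at most one child-edge plus the parent, handled by choosing $v$ deepest) with the ancestor side (bounded by the no-secant condition) should yield $\deg_H(v) \le 3$.

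The hard part will be the case analysis that bounds the ancestor-neighbors of the chosen vertex. In the Hamiltonian-path case the enumeration is totally ordered, so ``no secant edges'' immediately localizes neighbors to an interval of size two; in a tree, different root-paths overlap only on shared initial segments, so a vertex can have neighbors scattered among branches that never lie on a common root-path, and the no-secant hypothesis says \emph{nothing} about two edges living on incomparable branches. This is precisely why the bound degrades from $3$ to $4$ rather than staying at $3$: one must account for the extra edge that the branching of the tree permits. I expect the main obstacle to be making rigorous the claim that this branching contributes \emph{exactly one} additional unavoidable neighbor (hence $3$-degeneracy rather than a larger bound), and I anticipate that the extremal choice of $v$ must be engineered delicately---likely choosing $v$ to be a deepest leaf whose parent-edge or a minimal jump is extremal---so that secant-freeness along each relevant root-path can be invoked repeatedly to rule out all but three incident edges.
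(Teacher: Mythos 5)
Your plan has a fatal flaw at its very first reduction: graphs satisfying the hypotheses of Conjecture~\ref{4colorable} are \emph{not} $3$-degenerate --- in fact they have unbounded degeneracy --- so the statement you propose to prove (``every subgraph $H$ of $G$ contains a vertex of degree at most $3$'') is simply false. The oversight is that the crossing conditions in the definition of secant edges are strict inequalities ($l<p<m<q$ or $p<l<q<m$), so two edges that \emph{share an endpoint} are never secant, and neither are two \emph{nested} jumps; moreover, as you yourself note, two edges whose four endpoints do not lie on a common root-path are never secant, and normality is no obstacle since an edge from a vertex to any of its ancestors is allowed. Concretely, let $T$ consist of a spine $r=a_0,a_1,\dots,a_n$ together with leaves: one leaf $x_i$ attached at $a_i$ for each $4\leq i\leq n-1$, and three leaves attached at $a_n$. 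Let $G$ be obtained from $T$ by joining every leaf to \emph{all} of its ancestors. Then $T$ is normal in $G$; the fan edges at a fixed leaf pairwise share that leaf, hence are not secant; fan edges at two distinct leaves lie on no common root-path; and tree edges join consecutive vertices of any root-path, so they cross nothing. Yet for $n$ large every vertex of $G$ has degree at least $4$ (each leaf via its fan, each spine vertex via the fans of the leaves below it), so $G$ itself is a subgraph with minimum degree $\geq 4$. Attaching leaves only at $a_i$ with $i\geq k$ pushes the minimum degree up to $k$, so no degeneracy bound whatsoever is available --- even though these particular graphs are $3$-colorable ($2$-color the spine, give all leaves a third color), which is why they refute your method rather than the conjecture. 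Your reading of Lemma~\ref{nosecant} as ``no secant edges localizes neighbors to an interval of size two'' is also inaccurate even in the path case: a vertex may fan to all earlier vertices of a Hamiltonian path without creating secant edges (such graphs are exactly the outerplanar-type non-crossing chord graphs, which are $2$-degenerate despite unbounded maximum degree); the lemma's proof finds one low-degree vertex under a \emph{minimal} jump, and it is the laminarity on a single total order that saves degeneracy there. On a tree this collapses, because arbitrarily many pairwise-incomparable leaves can all fan onto the same shared spine, inflating every vertex's degree simultaneously.

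You should also be aware that the statement you set out to prove is an open conjecture in this paper, not a theorem: the paper establishes it only for saturated graphs whose normal spanning tree is a star$^0$-like or star$^1$-like tree, and its arguments are of a completely different nature than degeneracy --- clique cut-sets of size two given by a minimal jump (Lemma~\ref{cutset}), contraction of subtrees onto the Hamiltonian-path case (Proposition~\ref{Tpath} and Proposition~\ref{whip}), saturation, and delicate induction on the number of vertices as in Lemma~\ref{overnode}. That the authors resort to cut-set decompositions and contractions rather than any local sparsity argument is consistent with the counterexample above: the global structure (non-crossing families over each root-path, glued along shared prefixes) must be exploited, and no choice of an extremal vertex $v$, however delicately engineered, can have bounded degree forced upon it.
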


\noindent In the next section, we affirm  the above conjecture  for some trees.  However,  the rest of this section  is dedicated to show how proving Conjecture \ref{4colorable} lead  to improve the chromatic number of digraphs in $S$-$Forb(C(k_{1},k_{2})) \cap \cal S$.\medbreak

\noindent Denoting by  $\cal T$ the  tree classes for which Conjecture \ref{4colorable} is  confirmed,  we are able to get a better upper bound  for the chromatic number of digraphs  containing no subdivisions of a two-blocks cycle $C(k_{1},k_{2})$ and having a spanning out-tree, relying  only  on the simple notion of a maximal out-tree and the strategies of leveling and digraph decomposing:

\begin{theorem}\label{improvement}  Let $k_1$ and $ k_2$ be positive integers with $k_1 \geq k_2$  and let $T$ be an  out-tree whose underlying graph is  in  $\cal T$. Then every digraph  $D$  in $S$-$Forb(C(k_{1},k_{2}))$ and having a spanning out-tree $T$ is colored properly  using $4.k_1.(k_2-1)$ colors.\end{theorem}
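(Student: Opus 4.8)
The plan is to transpose the proof of Theorem~\ref{2cyclehamil} from a Hamiltonian directed path to the spanning out-tree $T$, and to replace the $3$-colouring device of Lemma~\ref{nosecant} by the $4$-colouring device of Conjecture~\ref{4colorable} for trees in $\mathcal{T}$. First I would apply Proposition~\ref{yarab} to assume that $T$ is a \emph{maximal} out-tree, so that every backward arc $(x,y)$ of $D$ joins $x$ to one of its ancestors and $T$ is a normal spanning tree of $G(D)$; then, for any two $T$-comparable vertices, the connecting $T$-segment is a directed subpath of $D$, and all the two-blocks cycles produced below will be assembled from such segments together with one or two chords.

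Next I would use the level function $l_{T}$ to decompose $V(D)$, by a two-parameter leveling, into $k_{1}(k_{2}-1)$ classes: one coordinate is $l_{T}(x)\bmod k_{1}$, whose role is to force the longer block of any putative cycle to have length at least $k_{1}$, exactly as the residues modulo $k_{1}$ do in Theorem~\ref{2cyclehamil}; the second coordinate ranges over $k_{2}-1$ values and is chosen both so that the shorter block is forced to have length at least $k_{2}$ and so that the induced subdigraph $D_{i}$ retains a normal spanning out-subtree $\tau_{i}$ whose underlying graph still lies in $\mathcal{T}$. Since $\{V(D_{i})\}$ partitions $V(D)$, colouring each $D_{i}$ with its own palette of $\chi(D_{i})$ colours properly colours $D$ with $\sum_{i}\chi(D_{i})$ colours, so it suffices to show $\chi(D_{i})\le 4$ for every $i$.

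The heart of the argument is the claim that no $D_{i}$ has two secant edges with respect to $\tau_{i}$, proved in the spirit of the claims inside Theorems~\ref{2cyclehamil} and~\ref{bispindlehamil}. Suppose $D_{i}$ had two secant chords; being secant, their four endpoints $w_{l}<_{T}w_{p}<_{T}w_{m}<_{T}w_{q}$ lie on a common root-path, so they are pairwise $T$-comparable and their outer level-gaps are large (at least $k_{1}$ and at least $k_{2}$, by the two coordinates). Running through the four orientations of the two chords, I would exhibit in each case two internally disjoint $xy$-dipaths --- each built from a directed $T$-segment and one chord --- whose block lengths are essentially these gaps, producing a subdivision of $C(k_{1},k_{2})$ and contradicting $D\in S$-$Forb(C(k_{1},k_{2}))$. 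The delicate cases are the mixed orientations, where the naive routing leaves a block of length $1$ (a mere $C(\cdot,1)$): one must instead route through \emph{both} chords, e.g. $w_{m}\to w_{l}\to w_{p}\to w_{q}$ opposite to $T[w_{m},w_{q}]$, to make both blocks long, and it is exactly the demand that the short block reach length $k_{2}$ that dictates the $(k_{2}-1)$ refinement.

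Finally, since each $\tau_{i}$ lies in $\mathcal{T}$ and $D_{i}$ has no secant edges with respect to $\tau_{i}$, Conjecture~\ref{4colorable} gives $\chi(D_{i})\le 4$, and summing the $k_{1}(k_{2}-1)$ disjoint palettes yields $\chi(D)\le 4k_{1}(k_{2}-1)$. I expect two points to be the main obstacle. The first is purely combinatorial: carrying out the orientation case-analysis so that every secant pair forces a genuine $C(k_{1},k_{2})$ rather than a degenerate $C(\cdot,1)$, and separately controlling forward chords that join $T$-incomparable vertices $x,y$ --- such a chord together with the two $T$-branches from their least common ancestor already forms a two-blocks cycle, which bounds how long these chords may be. The second, more structural, is to arrange the decomposition so that each $\tau_{i}$ genuinely belongs to the class $\mathcal{T}$ for which Conjecture~\ref{4colorable} is known, since the crude contraction of $T$ onto every $k_{1}$-th level (the direct analogue of the path case) need not remain in $\mathcal{T}$; this is the real reason the single factor $k_{1}$ of Theorem~\ref{2cyclehamil} must be inflated to $k_{1}(k_{2}-1)$.
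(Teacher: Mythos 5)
Your scaffolding (Proposition~\ref{yarab} to make $T$ maximal, leveling modulo $k_1$, a no-secant-edges claim, then Conjecture~\ref{4colorable}) matches the paper's opening moves, but there is a genuine gap at the heart of your plan: you have no mechanism for the arcs of $D_i$ that join $T$-\emph{incomparable} vertices. Secant edges with respect to $T$ are defined only for edges lying on a common root-path, and Conjecture~\ref{4colorable} presupposes a \emph{normal} spanning tree, i.e.\ that every edge of the graph joins $T$-comparable vertices. No refinement of the vertex partition can dispose of such arcs: even in a class where all levels are congruent modulo $k_1(k_2-1)$, a single forward arc $(x,y)$ between incomparable vertices is perfectly consistent with $C(k_1,k_2)$-subdivision-freeness (the least common ancestor of $x$ and $y$ may sit just above $x$, so the cycle through it has one short block), hence the class need not admit any normal spanning tree at all, let alone one in $\mathcal{T}$. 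The paper resolves this with an \emph{edge} partition rather than a finer vertex partition: inside each of only $k_1$ level classes, the arcs are split into $A_1$ (comparable ends) and $A_2$ (incomparable ends). Since $T$ is maximal, all $A_2$ arcs are forward, and a directed path of order $k_2$ in $D_i^2$ would --- after first showing no internal vertex is an ancestor of its last vertex --- combine with the two tree-paths from the least common ancestor of its endpoints into internally disjoint dipaths of lengths at least $k_1$ and $k_2$, a forbidden subdivision. So Gallai--Roy (Theorem~\ref{directedpath}) gives $\chi(D_i^2)\le k_2-1$, and Lemma~\ref{far} multiplies this with the bound $4$ for $D_i^1$ to get $\chi(D_i)\le 4(k_2-1)$, summed over $k_1$ classes. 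The factor $k_2-1$ thus arises multiplicatively from a longest-dipath argument, not --- as you propose --- additively from a second leveling coordinate, which you in any case never specify and which cannot do the job just described.

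Your second flagged obstacle, that the restricted tree $\tau_i$ of a level class may fall outside $\mathcal{T}$, is real for your route but is sidestepped in the paper by a trick you missed: one does not look for a spanning tree of $D_i^1$ at all, but colours $D_i^1\cup T$, adjoining the \emph{entire} tree $T$ back to the class. The normal tree is then $T$ itself, whose underlying graph is in $\mathcal{T}$ by hypothesis; a secant pair with respect to $T$ among $A_1$ arcs of a level class immediately yields a subdivision of $C(k_1,k_1)$ (your worry about degenerate $C(\cdot,1)$ configurations does not arise here, since under maximality every backward arc runs to an ancestor, and the short block is never extracted from the secant analysis --- it comes entirely from the $A_2$ side). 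Since $D_i^1$ is a subdigraph of $D_i^1\cup T$, it inherits the bound $\chi(D_i^1)\le 4$. In short: your proposal reproduces the easy half of the argument but is missing the two ideas that make the proof work --- the edge partition with the Gallai--Roy bound on incomparable arcs, and re-adjoining $T$ so that the hypothesis $T\in\mathcal{T}$ can be invoked directly.
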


\begin{proof}	Without loss of generality, we may assume that $T$ is a maximal out-tree in $D$. This assumption is true due to Proposition \ref{yarab}.	For $0 \leq i \leq k_1-1$, we set $V_{i}=\cup_{\alpha \geqslant 0} L_{i+\alpha k_1}(T)$ and we define $D_{i}$ to be the subdigraph of $D$ induced by $V_{i}$. Then we partition the arcs of $D_{i}$ as follows:$$A_{1}=\{(x,y); \hspace{1mm} x\leqslant_{T} y \hspace{1mm} or \hspace{1mm} y \leqslant_{T} x\};$$
	$$A_{2}=A(D_{i}) \setminus A_{1}.$$
	For $0 \leq i \leq k_1-1$ and $j=1,2$, let $D_{i}^{j}$ be the spanning subdigraph of $D_{i}$ whose arc-set is $A_{j}$.
	
	\begin{claim}\label{hel}
		$\chi(D_{i}^{1}) \leq 4$ for all $i \in \{0, 1, ..., k_1-1\}$.
	\end{claim}
	\noindent {\sl {Subproof.}}  Let $(D_{i}^{1})^{T}$ be the digraph obtained by the union of $D_{i}^{1}$ and $T$. Consider the underlying graph $G_{i}^{1}$ of $(D_{i}^{1})^{T}$. Observe that the underlying tree $T^1$ of $T$ is a normal tree of $G_{i}^{1}$. Moreover,  $G_{i}^{1}$ has no secant edges with respect to $T^1$, since otherwise $D$ contains a subdivision of $C(k_1,k_1)$ and so a subdivision of $C(k_1,k_2)$, a contradiction. According to the definition of  $\cal T$ and to the fact that $T^1$ is in $\cal T$, it follows that  $\chi(G_{i}^{1}) \leq 4$ and so $\chi ((D_{i}^{1})^{T}) \leq 4$. Because $D_{i}^{1}$ is a subdigraph of $(D_{i}^{1})^{T}$, our desired claim yields. $\hfill {\lozenge}$
	\begin{claim}\label{am}
		$\chi(D_{i}^{2}) \leq k_2-1$ for all $i \in \{0, 1, ..., k_1-1\}$.
	\end{claim}
	\noindent \sl {Subproof.} \upshape Suppose to the contrary that there exists $i_{0}$ such that $\chi(D_{i_{0}}^{2}) \geq k_2$. By  Theorem \ref{directedpath}, $D_{i_{0}}^{2}$ contains a directed path $P$ of order at least $k_2$, say  $P= y_{0}, y_{1}, ...y_{k_2-2}, y_{k_2-1}$. Note that the arcs of $D_{i_{0}}^{2}$ are of the form $(x,y)$ such that $x$ is not an ancestor of $y$ and vice versa. Because $T$ is a maximal  out-tree in $D$, it follows that  all the arcs of $D_{i_{0}}^{2}$ are forward with respect to $T$.	For all $ 0 \leq j \leq k_2-2$, observe that  $y_{j}$ is not an ancestor of $y_{k_2-1}$. To this end, we assume that the contrary is true. Let $\alpha$ be the greatest index such that $y_{\alpha} \leqslant_{T} y_{k_2-1}$. By the definition of $A_{2}$, $y_{\alpha +1}$ is distinct from $y_{k_2-1}$. Let $x$ be the least common  ancestor of $y_{\alpha}$ and $y_{\alpha +1}$. Then the union of $T[x,y_{\alpha +1}] \cup P[y_{\alpha +1}, y_{k_2-1}]$ and $T[x,y_{k_2-1}]$ forms a subdivision of $C(k_1,k_1)$, and thus a subdivision of $C(k_1,k_2)$, a contradiction.
	Now consider $x$ to be the least common ancestor of $y_{0}$ and $y_{k_2-1}$.  Due to the above observation, we remark  that $x$ is distinct from $y_{0}$ and  $T[x,y_{k_2-1}] \cap P=\{y_{k_2-1}\}$. This implies that $T[x,y_{k_2-1}]$ and $T[x,y_{0}] \cup P$ are two  internally disjoint $(x,y_{k_2-1})$-dipaths of length at least  $k_1$ and $k_2$ respectively, and so their union is a subdivision of $C(k_1,k_2)$ in $D$, which contradicts the fact that $D$ is $C(k_1,k_2)$-subdivision-free. This verifies  our claim. $\hfill {\lozenge}$\medbreak
	
	\noindent Therefore, applying Lemma \ref{far} to Claim \ref{hel},  Claim  \ref{am} and  the fact that $D_{i}= D_{i}^{1} \cup D_{i}^{2}$, we get that $\chi(D_{i}) \leq 4(k_2-1)$ for all $i \in \{0,1, ...,k_1-1\}$. Consequently, as $V(D_{i})$ form a partition of $V(D)$ for $0 \leq i \leq k_1-1$, we obtain a proper $4.k_1.(k_2-1)$-coloring of $D$. This completes  the proof. \end{proof}

\noindent Since any strong digraph contains a spanning out-tree, Theorem \ref{improvement} induces  the following improvement on the upper bound given by Kim et al. \cite{kim}  for some strong $C(k_1,k_2)$-subdivision-free digraphs:
\begin{corollary}
	Let $D$ be a digraph in $S$-$Forb(C(k_{1},k_{2})) \cap \cal S$, given that $k_1$ and $ k_2$ are positive integers with $k_1 \geq k_2$.  If the underlying graph of a spanning out-tree $T$ of $D$ is  in  $\cal T$, then the chromatic number of $D$ is at most   $4.k_1.(k_2-1)$.
\end{corollary}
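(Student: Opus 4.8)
The plan is to obtain the statement as an immediate specialization of Theorem \ref{improvement} to the class $\cal S$, so that the only real task is to verify that the hypotheses of that theorem are fulfilled. First I would recall the standard fact, noted just above the statement, that every strongly connected digraph admits a spanning out-tree: fixing any vertex $r$ and performing a breadth-first search from $r$ produces an out-tree reaching all of $V(D)$, since strong connectivity supplies an $rv$-dipath for each vertex $v$. This is what makes the hypothesis of the corollary meaningful in the first place --- one may genuinely speak of a spanning out-tree $T$ of $D$ --- and the corollary's assumption then upgrades this to a spanning out-tree whose underlying graph lies in $\cal T$.

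With such a $T$ fixed, I would simply apply Theorem \ref{improvement} to the pair $(D,T)$. That theorem asks for three conditions: $k_1 \geq k_2$, which is assumed; membership $D \in S$-$Forb(C(k_{1},k_{2}))$, which holds because $D \in S$-$Forb(C(k_{1},k_{2})) \cap \cal S$; and that $T$ be a spanning out-tree of $D$ whose underlying graph belongs to $\cal T$, which is exactly the standing hypothesis on $T$. Since all three are in force, Theorem \ref{improvement} yields a proper coloring of $D$ using $4 \cdot k_1 \cdot (k_2 - 1)$ colors, and therefore $\chi(D) \leq 4 k_1 (k_2 - 1)$, as required.

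The hard part, such as it is, is not in this argument at all but already discharged inside Theorem \ref{improvement} (the level decomposition into the $V_i$, the splitting of each $D_i$ into the tree-comparable arcs $A_1$ and the remaining arcs $A_2$, and the two claims bounding $\chi(D_i^1) \leq 4$ and $\chi(D_i^2) \leq k_2 - 1$). For the corollary itself the only genuinely verifiable point is the existence of the spanning out-tree guaranteed by strong connectivity; once that is in place, no further estimate, case analysis, or appeal to $\cal T$ beyond the theorem's own use is needed, and the bound follows verbatim.
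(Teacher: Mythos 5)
Your proposal is correct and matches the paper's reasoning exactly: the paper derives this corollary as an immediate consequence of Theorem \ref{improvement}, noting only (as you do, with the BFS justification spelled out) that every strong digraph contains a spanning out-tree. Nothing further is needed.
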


\section{A contribution to Conjecture \ref{4colorable}}\label{chapter2section3}

The aim of this  section is to support  Conjecture \ref{4colorable}. First, we start with some definitions and terminologies that formulate our conjecture in a more approachable way. Then, we introduce some  trees class  for which  our conjecture is  confirmed. \\

\noindent The next definition  allows us to deal more flexibly  with Conjecture \ref{4colorable}:
\begin{definition}	Let $G$ be a graph with a spanning rooted tree $T$. We say that $G$ is saturated with respect to $T$,  if the following three conditions are satisfied:
	\begin{description}[itemindent=1.5mm, leftmargin=1.5mm,  itemsep=1.5mm]
		\item[(i)] $T$ is a normal tree in $G$.
		\item[(ii)] $G$ contains no secant edges with respect to $T$.
		\item[(iii)]  $ \forall$ $x,$ $ y \in V(G)$ such that  $x\leqslant_{T} y$, if $xy \notin E(G)$ then  $G+xy$ contains secant edges with respect to $T$.
	\end{description}
\end{definition}
\medskip

\noindent Given a tree  $T$  in a graph $G$, $G[T]$ denotes the subgraph of $G$ induced by $V(T)$. When $G[T]$ is saturated with respect to $T$, we simply say that $G[T]$ is saturated. If $T$ is a normal tree in $G[T]$,  we denote by $G^{*}[T]$  a \textit{saturation} of $G[T]$, that is,  $G^{*}[T]$ is the graph obtained from $G[T]$  by adding  the maximum number of edges so that $T$ remains a normal tree of the obtained graph $G^{*}[T]$ and $G^{*}[T]$ has no secant edges with respect to $T$. \medbreak

\noindent From now on, we assume that  $G$ is a  graph  defined as in Conjecture \ref{4colorable}, unless otherwise specified.  Without loss of generality, we may assume that $G[T]$ is saturated, since otherwise we add to $G$ the maximal number of edges so that $T$ remains a normal tree of the obtained graph $G^{*}[T]$ and $G^{*}[T]$ has no secant edges with respect to $T$. Given an edge $yz$  of $G$, we say that $yz$ is a \textit{jump  with respect to $T$}, if $y$ is neither the predecessor of $z$ in $T$ nor a successor of it. For a jump $yz$  of $G$ with $y \leqslant_{T}  z$, we say that $y$ is the \textit{lower end} of $yz$ and $z$ is its \textit{upper end}. For a vertex $x$ of $G$,  $yz$ is called a \textit{jump  over  $x$}  if $y \leqslant_{T} x \leqslant_{T} z$. Furthermore, we say that  $yz$ is  a \textit{minimal jump} of $G$ if there is no other jump  whose both ends lie in  $T[y,z]$. However, $yz$  is called a \textit{higher jump} of $G$ if  the level of $y$ in $T$ is not smaller than that of the lower end of  any other jump.  \medbreak

\noindent For a non-negative integer $i$, star$^i$-like trees are well-organized trees  for which  Conjecture \ref{4colorable} is partially confirmed. Initially, a \textit{star$^0$-like tree}  is  a tree having exactly one vertex  whose  degree  is strictly greater than $2$,  called a \textit{node}.  A \textit{star$^1$-like tree} is a tree obtained from a star$^0$-like tree by replacing each of its leaves by a star$^0$-like tree. Inductively, for $i \geq 2$,  a \textit{star$^i$-like tree} is defined to be  a tree obtained from a star$^{(i-1)}$-like tree by replacing each of its leaves by a star$^0$-like tree. A \textit{whip} is a  star$^0$-like tree in which the successors of its unique node are leaves. \medbreak

\noindent   The following well-known lemma  will be used frequently throughout this section:
\begin{lemma}\label{cutset}
	Let $G$ be a graph with a clique cut-set $S$ whose removal  decomposes the vertex-set of $G\setminus S$ into the connected components $X_{1}, X_{2}, ..., X_{k}$. Then 
	$$\chi(G) \leq  \underset{1 \leq i \leq k}{max} \chi(G_{i});$$
	where $G_{i}$ is the subgraph of $G$ induced by $X_{i} \cup S$ for $i=1,2, ...,k$.
\end{lemma}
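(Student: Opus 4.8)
The plan is to prove the standard clique cut-set lemma by combining proper colorings of the pieces $G_i$ and merging them along the clique $S$. First I would fix, for each component index $i$, a proper coloring $\phi_i$ of $G_i = G[X_i \cup S]$ using exactly $\chi(G_i)$ colors, drawn from a common palette $\{1, 2, \ldots, M\}$ where $M = \max_{1 \le i \le k} \chi(G_i)$. The key structural fact I would exploit is that $S$ is a clique, so in every $G_i$ the vertices of $S$ receive pairwise distinct colors under $\phi_i$; hence each $\phi_i$ restricts to an injective coloring of $S$.

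The central idea is to \emph{permute colors} so that all the local colorings agree on $S$. Since each $\phi_i$ assigns distinct colors to the $|S|$ vertices of $S$, and all colors live in $\{1, \ldots, M\}$ with $M \ge |S|$ (because $G_1$ already contains the clique $S$ and so $\chi(G_1) \ge |S|$), I would fix a target injective coloring $\phi_0$ of $S$ and, for each $i$, choose a permutation $\pi_i$ of $\{1, \ldots, M\}$ such that $\pi_i \circ \phi_i$ agrees with $\phi_0$ on $S$. This is possible precisely because $\phi_i|_S$ and $\phi_0$ are both injective on the same set, so a bijection between their images extends to a permutation of the full palette. Replacing each $\phi_i$ by $\psi_i := \pi_i \circ \phi_i$ leaves it a proper coloring of $G_i$ (permuting colors preserves properness) while forcing $\psi_i|_S = \phi_0$ for all $i$.

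Having synchronized the colorings on $S$, I would define the global coloring $\psi$ on $V(G) = S \cup X_1 \cup \cdots \cup X_k$ by setting $\psi(v) = \psi_i(v)$ whenever $v \in X_i$, and $\psi(v) = \phi_0(v)$ for $v \in S$. This is well-defined because the $X_i$ are disjoint and the only overlaps among the domains of the $\psi_i$ occur on $S$, where they now coincide. To verify properness, I would take any edge $uv$ of $G$: since $S$ separates the components, every edge has both endpoints inside a single $G_i = G[X_i \cup S]$ (an edge with one endpoint in $X_i$ and the other in $X_j$ for $i \ne j$ would contradict $S$ being a separating set), so $\psi(u) \ne \psi(v)$ follows from properness of the corresponding $\psi_i$. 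Thus $\psi$ is a proper coloring using at most $M$ colors, giving $\chi(G) \le M = \max_{1 \le i \le k} \chi(G_i)$.

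The main obstacle, and the only point requiring genuine care, is the color-synchronization step: one must confirm that the clique hypothesis really does force injectivity of each $\phi_i$ on $S$ and that the palette is large enough to accommodate a common injective target coloring $\phi_0$. Both follow from the observation that $S$ being a clique makes $\chi(G_i) \ge |S|$ for every $i$, so $M \ge |S|$ and the permutations $\pi_i$ exist. The remaining verification that edges cannot cross between distinct components is immediate from the definition of $S$ as a cut-set, so no further case analysis is needed.
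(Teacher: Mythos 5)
Your proof is correct: the color-permutation argument (each $\phi_i$ is injective on the clique $S$, the palette size $M \ge |S|$ allows synchronizing all pieces to a common injective coloring $\phi_0$ of $S$, and no edge crosses between distinct components $X_i$, $X_j$) is exactly the canonical proof of this lemma. The paper itself states the lemma as well-known and gives no proof, so there is nothing to compare against; your write-up supplies the standard argument completely, including the two points that genuinely need checking (injectivity on $S$ and the absence of cross-component edges).
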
 
\noindent Note that the subgraphs $G_{1}, G_{2}, ...G_{k}$ mentioned above are called the \textit{blocks of decomposition} of $G$ with respect to the cut-set $S$.\medbreak

\noindent Now we are ready to demonstrate   Conjecture \ref{4colorable} for the case of saturated graphs having either a star$^0$-like tree or a  star$^1$-like tree as their  spanning normal trees. Before delving into the proofs, we need the following  obvious result that establishes a proper $3$-coloring of $G$ in case its spanning tree   is a   whip: 

\begin{proposition}\label{whip} Let $T$ be a whip, then $G$ has a proper $3$-coloring through which  the leaves are uniquely colored.\end{proposition}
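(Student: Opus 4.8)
The plan is to exploit the very rigid shape of a whip together with the saturation hypothesis so that the whole graph reduces, for colouring purposes, to a single root-starting path on which Lemma \ref{nosecant} applies. Write the handle of $T$ as the root-to-node dipath $r=u_0, u_1, \dots, u_m=v$, where $v$ is the unique node, and let $\ell_1,\dots,\ell_p$ be the leaves, which by definition of a whip are exactly the children of $v$. We may assume $G=G[T]$ is saturated. Since $u_0,\dots,u_m$ all lie on one path of $T$ starting at $r$, the no-secant condition forces the jumps among them to be pairwise non-crossing; call a handle jump $u_su_t$ (with $s<t$) a jump \emph{over} $u_i$ when $s<i<t$. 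The strategy is: (1) prove that all leaves have one and the same neighbourhood inside the handle; (2) $3$-colour the handle together with a single leaf via Lemma \ref{nosecant}; (3) assign every remaining leaf the colour of that one leaf, yielding the promised colouring in which the leaves carry a single common colour.

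The key step is the neighbourhood-identity claim. Fix a leaf $\ell$ and an ancestor $u_i$ with $i<m$; the pair $u_i,\ell$ is comparable ($u_i\leqslant_{T}\ell$) and non-adjacent in $T$, so $u_i\ell$ is a candidate jump. To test whether adding $u_i\ell$ creates a secant edge we may only compare it with edges whose endpoints lie on a common root-starting path, and the sole such path through $\ell$ is $u_0,\dots,u_m,\ell$. Along this ordering the jump $u_i\ell$ occupies the positions $i$ and $m+1$, so the only edges secant to it are handle jumps $u_su_t$ with $s<i<t$, i.e.\ handle jumps over $u_i$. Because $G$ has no secant edges and is saturated, it follows that $u_i\ell\in E(G)$ if and only if no handle jump passes over $u_i$. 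This criterion does not mention $\ell$, so every leaf has the same neighbourhood
\[
N=\{v\}\cup\{\,u_i : i<m,\ u_i \text{ is not jumped over}\,\}.
\]
I would also note here that two distinct leaves are incomparable, hence never adjacent, and that a jump from one leaf can never be secant to a jump from another (their endpoints share no common root-path), which is exactly why the criterion is leaf-independent.

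To finish, consider the subgraph $G_1$ of $G$ induced by $\{u_0,\dots,u_m,\ell_1\}$ with the enumeration $L_1=u_0u_1\cdots u_m\ell_1$. Its edges are the handle edges (pairwise non-crossing) together with the edges from $\ell_1$ to $N$; since each vertex of $N$ other than $v$ is not jumped over and $v$ is adjacent to $\ell_1$ in $T$, none of the $\ell_1$-edges crosses a handle edge. Thus $G_1$ has no secant edges with respect to $L_1$, and Lemma \ref{nosecant} gives a proper $3$-colouring of $G_1$. Now colour every other leaf $\ell_j$ with the colour of $\ell_1$: this is proper because the leaves are pairwise non-adjacent and all share the neighbourhood $N$, which is coloured identically for each of them. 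Hence $G$ admits a proper $3$-colouring in which all leaves receive one and the same colour, the unique colour promised by the statement.

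I expect the main obstacle to be the neighbourhood-identity claim rather than the final colouring. Its proof rests on the subtle point that, for a normal tree, secancy is tested only along a single root-starting path, so interactions between jumps attached to different leaves simply cannot occur; consequently a leaf's adjacency to an ancestor is dictated entirely by the handle's non-crossing jump pattern. The role of saturation is to realise the maximal (worst-case) neighbourhood $N$ uniformly for all leaves, guaranteeing that the single free colour left over from the $3$-colouring of $G_1$ is simultaneously admissible for every leaf.
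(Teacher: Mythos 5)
Your proof is correct, and it reaches the statement by a genuinely different mechanism than the paper's. The paper's proof is a contraction argument: it merges all the leaves into a single vertex $v$, observes that the resulting graph $G'$ has a Hamiltonian path (the handle followed by $v$) with no secant edges, applies Proposition \ref{Tpath} to $3$-color $G'$, and then gives every leaf the color of $v$; the two facts you isolate (edges attached to distinct leaves never interact, since no root-starting path of $T$ contains two leaves, and a leaf-edge crossing a handle jump would already have been secant in $G$) are precisely what make that contraction legal, so the underlying combinatorics is shared. What you do differently is keep the graph intact and invoke \emph{saturation} to prove the sharper structural fact that all leaves have literally identical handle-neighborhoods $N$, characterized by ``$u_i$ is not jumped over''; you then $3$-color the handle plus one representative leaf via Lemma \ref{nosecant} and propagate that leaf's color. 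Your neighborhood-identity claim is correct in both directions (no-secancy gives ``only if'', condition (iii) of saturation gives ``if'') and is more explicit than anything the contraction proof records. The one point to flag: the paper's proof never uses saturation, only the no-secant hypothesis, and this matters downstream, because Proposition \ref{whip} is later applied in the proof of Lemma \ref{overnode} to a contracted graph $G'$ that is not known to be saturated. Your argument uses saturation essentially, so it covers the general case only through your opening reduction ``we may assume $G[T]$ is saturated''; that reduction is valid (saturating only adds edges between $\leqslant_{T}$-comparable pairs, hence preserves the whip, the no-secant property, the leaf set, and leaf--leaf non-adjacency, and any proper coloring of the saturation restricts to one of $G$), but since it carries the full weight of the generality you should spell it out rather than assert it.
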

\begin{proof} Consider the path $P$  obtained from $T$ by contracting the leaves into a single vertex $v$, and let $G'$ be the graph induced by $V(P)$.   One may easily verify that $G'$ has no secant edges with respect to the Hamiltonian path $P$. According to Proposition \ref{Tpath}, it follows that $\chi(G') \leq 3$. Assume without loss of generality that the color of $v$ is 3. This means that all the leaves are not adjacent in $G$  to any vertex of color $3$. Thus, we can extend the proper $3$-coloring of $G'$ to a proper $3$-coloring  of $G$ by assigning the color $3$ to the leaves. This ends the proof. \end{proof}

\begin{proposition}Let $T$ be a star$^0$-like tree, then $\chi(G[T]) \leq 4$.\end{proposition}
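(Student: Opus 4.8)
The plan is to exploit the very rigid shape of a star$^{0}$-like tree. Write $v$ for the unique node. Since every vertex other than $v$ has degree at most $2$ in $T$, the ancestors of $v$ form a single path $r=s_{0},s_{1},\dots,s_{t}=v$, which I will call the \emph{stem}, and each subtree hanging below a child of $v$ is again a path; call these the \emph{branches} $B_{1},\dots,B_{m}$. The first fact I would record is that, because $T$ is normal in $G[T]$, every edge joins two $\leqslant_{T}$-comparable vertices, and two vertices lying in different branches are never comparable; hence \textbf{there is no edge of $G[T]$ between distinct branches}. Consequently the only edges of $G[T]$ are those inside the stem, those inside a single branch, those incident with $v$, and the ``long'' jumps running from a stem vertex over $v$ into a branch. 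In particular, for each $i$ the stem followed by $B_{i}$ is exactly a root-path of $T$, so the subgraph $H_{i}:=G[T][\,\text{stem}\cup B_{i}]$ has this root-path as a Hamiltonian path and inherits the no-secant property; Proposition \ref{Tpath} then gives $\chi(H_{i})\le 3$.

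Building on this, I would colour $G[T]$ in two stages. First colour the stem: it is a path carrying only non-crossing jumps, so Lemma \ref{nosecant} (equivalently Proposition \ref{Tpath}) yields a proper $3$-colouring $\phi$ of it. Because the branches are pairwise non-adjacent, it then suffices to extend $\phi$ to each branch \emph{independently}, and the claim to establish is that this can always be done with a single extra colour, giving $\chi(G[T])\le 4$. This is precisely the generalisation of Proposition \ref{whip}: when every branch is a single leaf (the whip case), the leaves are mutually non-adjacent and each sees a set of stem vertices using at most $3$ colours, so the repeated colour used in Proposition \ref{whip} completes the colouring. The substance of the present statement is to push this from single leaves up to whole branch-paths.

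To extend $\phi$ across a full branch $B_{i}$, I would work entirely inside the outerplanar graph $H_{i}$ and invoke the clique cut-set lemma (Lemma \ref{cutset}). After saturation, the jumps lying in $H_{i}$ triangulate the root-path, so $H_{i}$ is a maximal outerplanar graph; such a graph is chordal, its clique number is at most $3$, and its minimal separators are the diagonals of the triangulation together with the cut-vertex $v$. I would decompose $H_{i}$ along the clique cut-sets produced by $v$ and by the top edge $s_{t-1}v$, so that each block of decomposition either sits inside the stem (already coloured by $\phi$) or is a fan hanging below a single separating clique; colouring each block with the palette $\{1,2,3,4\}$ and gluing along the separating cliques, as permitted by Lemma \ref{cutset}, would produce the required extension of $\phi$.

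The hard part, and the only place where the fourth colour is genuinely forced, is controlling the \emph{fan of jumps running from the stem over the node $v$ into one branch}. A single branch vertex may be adjacent to many stem vertices, and a priori these could exhaust all three stem colours while that vertex also has a neighbour inside its own branch, so a naive left-to-right extension of $\phi$ could meet four forbidden colours at once. The whole purpose of saturating and of applying Lemma \ref{cutset} is to show that such a vertex is in fact simplicial within its block, so that its forbidden colours lie in a clique and therefore number at most $3$, leaving a free colour. Verifying that the triangulation of $H_{i}$ interacts correctly with the \emph{already fixed} colouring $\phi$ of the shared stem -- that is, that the clique cut-sets at $v$ really do detach each branch from the pre-coloured stem without ever raising the colour count past $4$ -- is the crux of the argument and the step I expect to demand the most care.
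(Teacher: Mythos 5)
Your structural setup is correct (no edges between distinct branches, each $H_i$ inherits the no-secant property and is $3$-chromatic by Proposition \ref{Tpath}), but there is a genuine gap, and it sits exactly at the step you flag as the crux. First, the decomposition you propose does not exist: as soon as there is a jump from a stem vertex over $v$ into the branch $B_i$ --- the only hard case --- neither $\{v\}$ nor $\{s_{t-1},v\}$ is a cut-set of $H_i$, because such jumps are stem--branch edges avoiding $v$. Moreover, the full attachment set of $B_i$ to the stem is in general \emph{not} a clique: if $b\in B_i$ is adjacent to $s_0$, $s_1$ and $v=s_2$, then $s_0v\notin E(G)$, since $s_0v$ and $s_1b$ would be secant. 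So Lemma \ref{cutset} produces no blocks of decomposition to glue. Second, and more fundamentally, Lemma \ref{cutset} cannot do what you ask of it: its proof colors the blocks independently and then \emph{permutes} palettes to agree on the separating clique, whereas your two-stage plan fixes a single coloring $\phi$ of the stem, shared by all branches, and must then \emph{extend} it. That is a precoloring-extension problem, and neither chordality of the saturated $H_i$ nor simpliciality of branch vertices bounds the forbidden colors by $3$, because the precolored neighbors of a branch vertex need not form a clique: the vertex $b$ above, with $s_0,s_1,v$ colored $1,2,3$, already sees three colors across a non-clique and is forced onto color $4$; whether one extra color then suffices throughout the branch depends on the nested (laminar) structure of the over-$v$ jumps, which you never analyze. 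The statement you reduce everything to is plausible, but unproven, and your sketched route to it fails.

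For contrast, the paper avoids precoloring extension entirely by induction on $|V(G)|$ with a case split on $G[T_x]$, where $x$ is the node. If $G[T_x]$ contains a jump $yz$ with $x\leqslant_{T} y\leqslant_{T} z$, then $\{y,z\}$ genuinely is a clique cut-set (the absence of secant edges prevents any edge from the interior of $T[y,z]$ to the outside): the inside block is a no-secant path graph, hence $3$-colorable by Proposition \ref{Tpath}, and the outside block is a smaller star$^0$-like instance; here Lemma \ref{cutset} is applied in its legitimate form, with full freedom to recolor each block. If instead $G[T_x]$ is a tree, the paper contracts $T_x$ to a single vertex $v'$, $3$-colors the resulting no-secant path graph by Proposition \ref{Tpath}, and un-contracts via a proper $2$-coloring of the tree $T_x$ with palette $\{\phi(v'),4\}$: precisely the over-$v$ jumps you struggle with all become edges at $v'$ after contraction, and are neutralized because no neighbor of $v'$ carries the color $\phi(v')$. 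To salvage your route you would have to either prove the extension claim directly from the nestedness of over-$v$ jumps, or choose $\phi$ on the stem with the branches in view rather than first and independently; as written, the argument is incomplete.
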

\begin{proof}The proof is by induction on the number of vertices. Let $x$ be the unique vertex of $T$ such that $d_T(x) >2$. We consider two cases:  $G[T_x]$ is either a tree or not.  Assume first that $G[T_x]$ is not a tree. This implies  that $G[T_x]$  has a jump $yz$ with $x \leqslant_{T} y \leqslant_{T} z$. It can be easily verified that  $S=\{y,z\}$ is a clique cut-set of $G[T]$, where $G_1=G[T[y,z]]$ and $G_2=G[T-T[y,z]+\{y,z\}]$ are the two blocks of decomposition of $G[T]$ with respect to $S$. Due to Proposition \ref{Tpath}, it follows that $\chi(G_1) \leq 3$ and so $\chi(G_1) \leq 4$. However, the induction hypothesis gives that $\chi(G_2) \leq 4$. Consequently, due to Lemma \ref{cutset}, we get that $\chi(G) \leq 4$. To complete the proof, it remains to consider the case where  $G[T_x]$ is  a tree, that is, $G[T_x]=T_x$. Contract the vertices of $T_x$ to a single vertex $v$. The obtained contracted graph $G'$ has a Hamiltonian path ending at $v$ without secant edges. According to Proposition \ref{Tpath}, $G'$ has a proper $3$-coloring.  Since any tree is $2$-colorable, we can extend the coloring of $G'$ to a proper $4$-coloring of $G$, by assigning alternatively to each vertex in $T_x$ either the color of $v$  or the color $4$.  This yields the required result. \end{proof} 

\begin{lemma} \label{overnode} Let $T$ be a star$^1$-like tree such that every jump is over a node, then $\chi(G[T]) \leq 4$. \end{lemma}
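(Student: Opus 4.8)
The plan is to argue by induction on $|V(T)|$, reducing $G[T]$ through clique cut-sets supplied by the jumps, exactly as in the preceding proposition for star$^0$-like trees but now iterating the argument across the two levels of nodes. First I would record the structural consequence of the hypothesis. Since $T$ is saturated and star$^1$-like, its branch vertices are precisely the node $a$ of the base star$^0$-like tree and the nodes $b_1,\dots,b_s$ of the star$^0$-like trees glued at its leaves, every other vertex having degree at most $2$ in $T$. Because every jump is over a node, no jump can have both ends interior to a single path joining two consecutive branch vertices (or a branch vertex to a leaf): the tree-path $T[y,z]$ of such a jump would contain no node. Hence every jump $yz$ with $y\leqslant_T z$ spans one of $a,b_1,\dots,b_s$. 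For the base case, if $G[T]$ has no jump at all then $G[T]=T$ is a tree and $\chi(G[T])\leqslant 2\leqslant 4$.

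For the inductive step I would pick a jump to cut on. Let $x$ be a node of maximal level, so that $T_x$ is a spider with centre $x$. If $T_x$ carries a jump, choose a \emph{minimal} jump $yz$ with both ends in $T_x$, necessarily with $y\leqslant_T x\leqslant_T z$ so that it spans $x$. Then $\{y,z\}$ is a clique, and using that $G[T]$ has no secant edges one checks that $\{y,z\}$ is a clique cut-set whose blocks of decomposition are $G_1=G[T[y,z]]$ and $G_2=G[T-T(y,z)+\{y,z\}]$, where $T(y,z)$ is the interior of the tree-path. Since $T[y,z]$ is a path carrying no internal jump (by minimality of $yz$ and the absence of jumps inside a single leg), $G_1$ has a Hamiltonian path with no secant edges, so $\chi(G_1)\leqslant 3$ by Proposition~\ref{Tpath}. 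The tree underlying $G_2$ is again star$^1$-like (or star$^0$-like) on strictly fewer vertices, with every jump still over a node, whence $\chi(G_2)\leqslant 4$ by induction (or by the preceding proposition). Lemma~\ref{cutset} then gives $\chi(G[T])\leqslant\max\{\chi(G_1),\chi(G_2)\}\leqslant 4$.

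It remains to treat the case in which some deepest spider $T_x$ carries no jump, i.e.\ $G[T_x]=T_x$. Mirroring the star$^0$-like proof, I would contract $T_x$ to a single vertex $v$; the over-node hypothesis guarantees that the only edges leaving $T_x\setminus\{x\}$ are jumps spanning $x$, which after contraction become edges incident to $v$, so the contracted graph retains a spanning normal tree with no secant edges, star$^1$-like on fewer vertices. One colours it with at most $4$ colours by induction and then re-expands $T_x$, a tree, alternating the colour of $v$ with a fourth colour; when the children of $x$ are all leaves this is precisely the whip situation of Proposition~\ref{whip}, which reserves a common colour for the leaves and makes the re-expansion conflict-free.

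The main obstacle I anticipate is the bookkeeping certifying, at each reduction, that the chosen pair really is a clique cut-set and that the residual graph falls back into the inductive class. The no-secant condition forbids an edge from crossing the cut only when the two offending endpoints lie on a common root-path; the dangerous configurations are the \emph{upward} jumps running from a bottom spider $T_{b_j}$ to the stem above $a$, which no-secant forces to be nested but does not eliminate. Showing that these nested upward jumps can be absorbed into a clique cut-set at $b_j$ together with the upper end of the outermost such jump, and that deleting the enclosed part leaves a smaller star$^1$-like tree in which every jump is still over a node, is the delicate point on which the whole induction rests.
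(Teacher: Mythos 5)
Your framework (induction on $|V(T)|$, clique cut-sets from minimal jumps, contraction of jump-free spiders) is sound in its easy parts --- for instance, your cut-set step is fine, noting that a jump with both ends in a deepest spider $T_x$ must have lower end equal to $x$ itself --- but the proof has two genuine gaps, and the second is the entire content of the paper's argument. First, the contraction--re-expansion step fails as stated. When you contract a single deepest jump-free spider $T_x$ to $v$ and $4$-colour the contracted graph by induction, there is no fresh colour left: a vertex $w \in T_x \setminus \{x\}$ may carry a jump over $x$ to an ancestor $u$, and the inductive colouring only guarantees that $u$ and $v$ receive different colours, not that $u$ avoids your ``fourth colour''. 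Since the proper $2$-colouring of the tree $T_x$ is forced up to swapping its two classes, you cannot steer $w$ away from a colour that $u$ already has. Proposition \ref{whip} does not rescue this: it produces a $3$-colouring of a whip in which a fourth colour is \emph{globally} unused, which is exactly why the paper, in the corresponding case, contracts \emph{all} subtrees $T_{y_i}$ below the top node simultaneously (when every $G[T_{y_i}]$ is a tree), obtains a whip $3$-coloured with monochromatic leaves, and only then spends the reserved colour $4$ on the re-expansion. Your one-spider-at-a-time contraction destroys this reservation.

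Second, you correctly isolate the dangerous configuration --- nested upward jumps running from a bottom spider to the stem --- but you leave it unresolved, and the fix you sketch (absorbing them into a clique cut-set at $b_j$ together with the upper end of the outermost such jump) is unlikely to work: removing two such vertices does not separate the enclosed region, because the remaining legs of the bottom spider stay attached to the rest of the graph through its node. The paper resolves precisely this case not by cut-sets but by exploiting saturation. It picks a minimal jump $ab$ over a node $x_{i_0}$ with the level of $a$ maximal, deduces from saturation that $b$ is a successor of $x_{i_0}$ and $x_{i_0}$ a successor of $a$, introduces for each successor $b_j$ of $x_{i_0}$ the last vertex $d_j$ below $b_j$ adjacent to $a$, and then performs vertex-deletion surgeries on the tree: either delete $x_{i_0}$, reattach its successors to $a$ via the edges $ab_j$, apply induction, and recolour $x_{i_0}$ using $N_G(x_{i_0}) \subseteq N_G(t)$ (where $t$ is the predecessor of $a$, or a suitable higher neighbour $u$ when $tb_j \notin E(G)$); or, when some $b_j \neq d_j$, delete the predecessor $p_1$ of $d_1$, add the edge $p_2 d_1$, apply induction, and recolour $p_1$, which has only the three neighbours $d_1$, $p_2$, $a$. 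These saturation-driven reductions --- in particular the recolouring of a deleted vertex via neighbourhood containment or bounded degree --- are the key ideas missing from your proposal, so the induction as you set it up cannot close.
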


\begin{proof} We argue by induction on the number of vertices. Assume that the contrary is true, and let $x$ be the node of minimal level, $x_1, x_2, ..., x_m$ be all the other nodes and $y_1, y_2, ..., y_m$ be the successors of $x$.  For all $1  \leq i \leq m$,  suppose that $G[T_{y_i}]$ is a tree and consider the whip  $T'$ obtained from $T$ by contracting each $T_{y_i}$ to a single vertex $v_{y_i}$. Clearly, the resultant graph  $G'$ has no  secant edges with respect to its normal spanning tree $T'$. Note that  the  leaves of $T'$ are the  vertices $\{v_{y_1}, v_{y_2}, ..., v_{y_m}\}$. According to  Proposition \ref{whip}, $G'$ has a proper $3$-coloring that attributes to the leaves  a unique  color, say the color $3$. Since any tree is $2$-colorable, we may extend the coloring of $G'$ to a proper $4$-coloring of $G$ by assigning appropriately to each vertex in $G[T_{y_i}]$  the color $3$ or $4$. This implies that $\chi(G[T]) \leq 4$, which   contradicts  our assumption. Thus,  there exists  $ i_0 \in  [m]$ such that  $x_{i_0}$ is a node    over which there is a minimal jump, say $ab$. Without loss of generality, assume that $ a \leqslant_{T} b$. We choose $ab$ so that the level of $a$ is maximal.  Because $G$ is saturated and $ab$ is a minimal higher jump over $x_{i_0}$, we may easily check that $b$ is a successor of $x_{i_0}$ which is in its turn a successor of $a$. For all $1 \leq j \leq r$,  let $b_j$ be the successor of $x_{i_0}$ and  $d_j$ be the last vertex in $T_{x_{i_0}}$ such that $b_j \leqslant_{T} d_j$ and $ad_j \in E(G)$.  Again, due to  the maximality of the level of $a$ and the fact that $G$ is saturated,  we get that $av \in E(G)$ for all $v$ in $T[b_j,d_j]$ and for all $j \in [r]$. Assume first that $b_j=d_j$ for every  $ j \in [r]$, and let $t$ be the unique predecessor of $a$. If $t$ is adjacent to $b$, then $t$ is adjacent to $b_j$ for every $j \in [r]$, since otherwise we get either secant edges, jumps over a vertex of degree $2$ in $T$, or a neighbor of $a$ in $G[T_{x_{i_0}}] $ other than $x_{i_0}$ and $b_j$, a contradiction to our assumption. Let $T'$ be the tree obtained from $T$ by deleting $x_{i_0}$ and adding the edges $ab_j$ for all $j \in \{1,2,...,r\}$. It is straightforward to see that  $T'$ is a star$^1$-like tree, $G'=G[T']$ is a graph with  no secant edges with respect to $T'$ and  every jump of $G'$ is over a node.  Applying the induction hypothesis to $G'$, we get that $\chi(G') \leq 4$. But the facts that  $N_G(x_{i_0}) \subseteq N_G(t)$ and  $t x_{i_0} \notin E(G)$ induce  a proper $4$-coloring of $G$ by assigning  to $x_{i_0}$ the same color of $t$,   a contradiction. Thus $t b_j \notin E(G)$ for all $ j \in [r]$. This implies that, for every $j \in \{1,2,...,r\}$, $G + t b_j$ contains secant edges with respect to $T$, and so $G$ contains either a neighbor of $a$ other than $b_j$ and $x_{i_0}$ in $G[T_{y_i}]$, or $a$ has a neighbor in $G[T-T_t]$. Due to the assumption that $b_j=d_j$ for all $j \in [r]$, it follows that $a$ has a neighbor $u$ in $G[T-T_t]$. We choose $u$ so that it is the nearest neighbor of $x_{i_0}$  to the root. Note that $u$ is probably  the root.  It is  easy to see that $u$ is adjacent to $b_j$ for all $j \in [r]$.  Consider now the graph $G'=G[T']$, where $T'$  is the tree obtained from $T$ by deleting $x_{i_0}$ and adding the edges $ab_j$ for all $j \in \{1,2,...,r\}$. Proceeding in the same manner as before, we reach a contradiction to the assumption that $\chi(G) > 4$.  Therefore, it follows that there exists $ 1 \leq j \leq r$ such that $b_j \neq d_j$, say $j=1$. Let $p_1$ be the predecessor of $d_1$ and let $p_2$ be that of $p_1$. Note that probably $p_1 = b_1$, and so in this case $p_2=x_{i_0}$. Consider the graph $G'=G[T']$, where $T'$  is the tree obtained from $T$ by deleting $p_1$ and adding the edge $p_2 d_1$. Clearly, $T'$ is a star$^1$-like tree, $G'$ has no secant edges with respect to its normal spanning tree $T'$ and the jumps of $G'$ are only over the vertices whose degree in  $T'$ is greater than $	2$. By the induction hypothesis, we get that $\chi(G') \leq 4$. To reach the final contradiction, we extend the proper $4$-coloring of $G'$ to a proper $4$-coloring  of $G$ by assigning to $p_1$ a color distinct from the colors of its three unique neighbors $\{d_1, p_2, a\}$, a contradiction. This terminates the proof.    \end{proof}

\begin{proposition}
	Let $T$ be a star$^1$-like tree, then $\chi(G[T] ) \leq 4$.
\end{proposition}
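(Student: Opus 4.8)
The plan is to induct on the number of vertices of $T$ and to reduce the general situation to the one already settled in Lemma \ref{overnode}, namely when every jump lies over a node. Throughout I keep the standing assumption that $G[T]$ is saturated. If every jump of $G[T]$ is over a node, then Lemma \ref{overnode} immediately gives $\chi(G[T])\leq 4$; this covers, in particular, the base case in which $G[T]$ has no jump at all and is therefore a tree (hence even $2$-colorable). So the entire content of the argument is to dispose of the jumps that are \emph{not} over a node.

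Assume then that some jump is not over a node, and among all such jumps choose one, say $yz$ with $y\leqslant_T z$, that is minimal in the sense that no other jump has both its ends inside $T[y,z]$. Since $yz$ is not over a node, no vertex of $T[y,z]$ (endpoints included) is a node, so every internal vertex of $T[y,z]$ has degree $2$ in $T$ and the interior of $T[y,z]$ is a bare path. I would first show that $S=\{y,z\}$ is a clique cut-set of $G[T]$ whose removal splits off the open segment $(y,z)$ as one connected component. That $S$ is a clique is clear because $yz$, being a jump, is an edge of $G$. For the cut-set property I must check that no internal vertex $w$ of $T[y,z]$ is joined by an edge of $G$ to a vertex outside $T[y,z]$: by normality such an edge is a jump $ww'$ with $w'$ comparable to $w$ in $T$, and a short case analysis on the position of $w'$ shows that either $ww'$ lies entirely inside $T[y,z]$, contradicting the minimality of $yz$, or the pair $ww'$, $yz$ realizes the crossing pattern $l<p<m<q$ along the root path through $w'$, i.e.\ they are secant edges, which are forbidden. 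Producing secant edges in every escaping case is the one genuinely delicate step.

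Once $S=\{y,z\}$ is known to be a clique cut-set, I would finish via Lemma \ref{cutset}. The two blocks of decomposition are $G_1=G[T[y,z]]$ and $G_2=G[V(G)\setminus (y,z)]$. The block $G_1$ carries the Hamiltonian path $T[y,z]$ and inherits the absence of secant edges, so Proposition \ref{Tpath} gives $\chi(G_1)\leq 3$. For $G_2$, contracting the bare segment $T[y,z]$ to the single edge $yz$ turns $T$ into a tree $T'$ on $V(G_2)$; since contracting an internally degree-$2$ path leaves all ancestor relations and all branch vertices untouched, $T'$ is again a normal spanning tree, has no secant edges, and is still star$^1$-like, while having strictly fewer vertices than $T$ (the interior of $T[y,z]$ is nonempty because $y$ is not the predecessor of $z$). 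The induction hypothesis then yields $\chi(G_2)=\chi(G[T'])\leq 4$, and Lemma \ref{cutset} gives $\chi(G[T])\leq\max\{\chi(G_1),\chi(G_2)\}\leq 4$, closing the induction. The main obstacle, as indicated, is the clique cut-set verification of the second paragraph; the bookkeeping confirming that $T'$ remains a legitimate smaller star$^1$-like instance is routine but should be stated carefully so that the induction is well founded.
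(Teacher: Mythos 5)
Your proof is correct and takes essentially the same route as the paper's: induction on the number of vertices, reduction to Lemma \ref{overnode} when every jump is over a node, and otherwise decomposition along the clique cut-set $S=\{y,z\}$ for a jump over no node, with Lemma \ref{cutset} combining $\chi(G_1)\leq 3$ and the inductive bound $\chi(G_2)\leq 4$. Your two refinements are actually improvements in rigor: choosing the jump minimal and invoking Proposition \ref{Tpath} (the paper instead asserts that $G_1$ is a cycle, which implicitly requires exactly such a minimality choice), and explicitly verifying via the two secant-edge crossing patterns that $\{y,z\}$ is a cut-set, which the paper leaves unjustified.
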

\begin{proof}
	We proceed by induction on the number of  vertices of $G$. There are two cases to consider: If every jump of $G$ is over a node, then Lemma \ref{overnode} implies that  $\chi (G) \leq 4$. Otherwise,  if  $G$ has a jump $yz$ such that all the vertices  in $T[y,z]$ are not nodes, let  $T_1=T[y,z]$ and $T_2$ be the tree obtained from $T$ by replacing the path $T[y,z]$ by the edge $yz$.  Whence, $S=\{y,z\}$ is a clique cut-set of $G$, with $G_1=G[T_1]$ and $G_2= G[T_2]$ are the two blocks of decomposition of $G$ with respect to  $S$. Observe that $G_1$ is a cycle and so $\chi(G_1) \leq 3$. However, $T_2$ is a star$^1$-like tree and  $G_2$ is a graph with no secant edges with respect to its normal spanning  $T_2$. By applying the induction hypothesis, we get that  $\chi(G_2 ) \leq 4$. Therefore, due to Lemma \ref{cutset}, the hoped result directly follows. 
\end{proof}

\noindent The question that arises here is, can a similar result  be established  for $\chi(G[T])$ if $T$ is a star$^i$-like tree with $i \geq 2$? If  the answer to this question is  positive, then Conjecture \ref{4colorable} is confirmed and consequently Theorem \ref{improvement} turns to be true  whatever is  the spanning out-tree.    To see the latter statement, we need the following observation   with the fact that any subgraph has a chromatic number not exceeding that of the graph it contains: 

\begin{proposition}
	Let $G$ be a graph defined as in Conjecture \ref{4colorable}. Then there exists a graph $G'$ with normal spanning tree $T'$, such that $G'$ has no secant edges with respect to $T'$, $T'$ is a star$^{i}$-like tree for some non-negative integer $i$, $E(G') \setminus E(T') = E(G) \setminus E(T)$ and the  vertices $a',b'$ of $G'$  corresponding to the vertices $a,b$ of  $G$ preserve the tree-order, i.e  if $a \leqslant_{T} b$ then  $a' \leqslant_{T'} b'$.
\end{proposition}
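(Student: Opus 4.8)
The plan is to realise $G'$ on a vertex set \emph{containing} that of $G$, keeping every jump of $G$ exactly where it is and only \emph{enlarging} the tree, so that the required correspondence is simply the inclusion $V(G)\hookrightarrow V(G')$. Concretely, I would leave the jumps $E(G)\setminus E(T)$ and the vertices of $G$ in place, attach some new, jump-free vertices to $T$, and set $T'$ to be the enlarged tree and $G'=T'\cup\big(E(G)\setminus E(T)\big)$. With this setup three of the five demands are essentially automatic. Since the new vertices do not lie between old ones, the ancestor relation on the old vertices is unchanged, so $a\leqslant_{T}b$ gives $a\leqslant_{T'}b$ and the tree order is preserved; as $T$ was normal, every old jump still joins a $\leqslant_{T'}$-comparable pair, whence $T'$ is normal in $G'$; the only non-tree edges of $G'$ are the old jumps, so $E(G')\setminus E(T')=E(G)\setminus E(T)$; and $G$ is an \emph{induced} subgraph of $G'$ (deleting the added vertices restores $T\cup(E(G)\setminus E(T))=G$), which is exactly what transfers a colouring, $\chi(G)\le\chi(G')$.

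The substantive task is thus to choose the added vertices so that $T'$ becomes star$^{i}$-like while creating no secant edge. The second requirement is easy and I would dispose of it first: each new vertex is attached as a pendant vertex, or inside a small pendant jump-free spider, and carries no jump, so it lies in no secant pair; since the old jumps retain their relative positions along every branch, the no-secant-edge property of $(G,T)$ is inherited by $(G',T')$.

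The combinatorial heart is the claim that adding jump-free pendant structure already suffices to reshape $T$ into the nested-spider template, read from the fixed root $r$. I would first observe that every \emph{non-root} vertex which branches has degree at least three, hence is already a \emph{node}; so the only obstruction to a clean spider decomposition from $r$ is a root with exactly two children (which then branches without being a node) or a tree carrying no node at all. A single pendant leaf attached at $r$ promotes $r$ to a node and removes this obstruction. To meet the recursive definition on the nose—every leaf of a level-$k$ spider being expanded into a level-$(k{+}1)$ spider, with exactly one node per spider—I would finally \emph{pad} the deficient branches by hanging further jump-free pendant spiders at their short leaves, equalising the nesting depth to the maximum number $i+1$ of nodes occurring on a root-to-leaf path of $T$. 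Only finitely many vertices are added and, being jump-free, they preserve every property established above.

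The step I expect to be the main obstacle is precisely this reconciliation with the rigid recursive definition of star$^{i}$-like trees: arguing that the local repairs (promoting the root, padding the short branches) can be carried out simultaneously and consistently so that the global tree is genuinely star$^{i}$-like for a single integer $i$, rather than merely ``spider-decomposable with uneven depths''. Once that is secured the five stated properties hold for $(G',T')$, and via $\chi(G)\le\chi(G')$ a positive answer to Conjecture \ref{4colorable} for star$^{i}$-like trees would yield it in full generality.
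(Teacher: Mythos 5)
Your strategy is genuinely different from the paper's: you fix the jump set and enlarge $T$ by pendant, jump-free material, so that the correspondence $a\mapsto a'$ is simply the inclusion $V(G)\hookrightarrow V(G')$, whereas the paper argues by induction on the number of vertices, contracting a leaf having no sisters together with its predecessor (or a set of sister leaves) into a single vertex, invoking the induction hypothesis, and then un-contracting, grafting copies of the subtree ${T'_1}_{v'}$ and whips to restore star-likeness. The routine parts of your plan are correct and cleanly argued: pendant additions insert no new vertex between old ones on any root-path, so the tree-order on $V(G)$, normality of $T'$, the equality $E(G')\setminus E(T')=E(G)\setminus E(T)$, and the absence of secant pairs (new vertices carry no jumps, and the interleaving pattern of old jump-ends along any root-path of $T'$ coincides with that along the corresponding root-path of $T$) are all preserved, and $\chi(G)\leq\chi(G')$ comes for free --- indeed your inclusion-based correspondence is tidier than the paper's, which only yields an abstract order-preserving correspondence.

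The proposal nevertheless has a genuine gap, precisely where you flag it: the claim that root-promotion plus depth-equalizing padding produces a tree that literally satisfies the recursive definition of a star$^{i}$-like tree is asserted but never proved, and that claim \emph{is} the entire content of the proposition. What your argument silently relies on is a structural characterization of the form: a rooted tree in which every branching vertex has degree at least $3$ (is a node) and every root-to-leaf path carries exactly $i+1$ nodes is star$^{i}$-like. This is plausible and provable by peeling off the first node and recursing, but it must be stated and proved, and doing so forces you to resolve edge cases your pendant-only toolkit does not obviously cover: when two nodes of $T$ are adjacent, the leg of the level-$j$ spider ending at the next center degenerates to a single edge, so you must either allow the replaced leaf to coincide with the center of the replacement spider, or subdivide a tree edge --- an operation you never licensed (subdivision is in fact harmless for all five required properties, since it preserves the order of old vertices along root-paths, but that needs to be said and checked). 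Likewise, "pad the short leaves with pendant spiders" must be checked to keep \emph{exactly} one node per constituent spider, e.g. the attachment vertex must not itself acquire degree greater than $2$ at an unintended level. Until that characterization and these repairs are carried out, what you have is a programme rather than a proof; the paper's contraction/un-contraction induction is exactly how it avoids ever needing such a global structural statement.
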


\begin{proof}
	The proof is built by induction on the number of vertices. Let $G$ be a graph  defined as in Conjecture \ref{4colorable}, and  let $L$ be the set of leaves of $T$. For sake of simplicity, for any graph $G'$ having a  normal spanning tree $T'$ such that $G'$ has no secant edges with respect to $T'$, where $T'$ is a star$^{i}$-like tree for some non-negative integer $i$, $E(G') \setminus E(T') = E(G) \setminus E(T)$ and the  vertices $a',b'$ of $G'$  corresponding to the vertices $a,b$ of  $G$ preserve the tree-order, we say that $G'$ satisfies the property $\color{frenchblue} \circledast$.  Assume that there exists $b \in L$ such that $b$ has no sisters. Consider $T_1$ the tree obtained from $T$ by contracting $b$ and its unique predecessor $a$  into a new vertex $v$. It is easy to verify that $G_1=G[T_1]$ is a graph with no secant edges with respect to its normal spanning tree $T_1$. By applying the induction hypothesis, it follows  the existence of a graph  $G'_1$ with normal spanning tree $T'_1$ satisfying the property  $\color{frenchblue} \circledast$. Let $v'$ be the vertex corresponding to $v$ in $G'_1$. If $v'$ is a leaf in $T'_1$, we simply un-contract $v'$. Else, we remove  the neighbors of $v'$ which  are neighbors of $b$ in $G-T$ and we add them to a successor $b'$ of $v'$.  It is not hard to check that resultant graph $G'$ has  the property $\color{frenchblue} \circledast$. This verifies the case where $G$ has at least one leaf $b$ with no sisters. To complete the demonstration, it remains to prove the case where every vertex in $L$ has sisters. To this end, let $b$ be a vertex in $L$ such that $b$ has sisters, say $\{b_1, b_2, ..., b_m\}$ with $m \geq 1$. Assume that  $a$ is the unique predecessor of $b$ and its sisters in $T$. Let $T_1$ be the tree obtained from $T$ by contracting  $b$  and its sisters into a new vertex $v$. Clearly, $G_1=G[T_1]$ is a graph  defined  as  in Conjecture \ref{4colorable}. Thus,  the induction hypothesis induces the existence of a graph $G'_1$ with the property $\color{frenchblue} \circledast$.  More precisely, $G'_1$ has a normal spanning tree $T'_1$ with no secant edges with respect to $T'_1$, such that $T'_1$  is a  star$^{i}$-like tree for some  non-negative integer $i$. Assume that $v'$ is the vertex of $G'_1$ that corresponds to $v$ and $a'$ is its unique predecessor.  Since  $E(G'_1) \setminus E(T'_1) = E(G_1) \setminus E(T_1)$, it follows that  $a'$ matches with  $a$.  Un-contracting $v'$, we denote by  $\{b', b'_1, b'_2, ..., b'_t\}$ the set of  the resultant vertices that correspond  to $\{b,b_1, b_2, ..., b_t\}$ respectively. If $a'$ is a node in $T'_1$, we obtain the desired graph by adding to $b'$ and to each $b'_i$  a copy of ${T'_1}_{v'}$. Else if $d_{T'_1} (a')=2$, we proceed as before by adding  to $b'$ and to each $b'_i$  a copy of ${T'_1}_{v'}$. This step produces a tree $T''_1$ which is not a star$^{i}$-like tree for any $i$. Thus to reach the tree  we are looking for, we add to each leaf in $T''_1 -{{T''_1}_{a'}}$ a whip.  This leads to  a graph $G'$ having a  normal spanning tree $T'$, such that $G'$ has no secant edges with respect to $T'$, $T'$ is a star$^{i+1}$-like tree, $E(G') \setminus E(T') = E(G) \setminus E(T)$ and the  vertices $a',b'$ of $G'$  corresponding to the vertices $a,b$ of  $G$ preserve the tree-order. This ends the proof. 
\end{proof}
\section{Subdivisions of oriented cycles in tournaments}

\noindent Given a subdivision $H$ of an oriented  cycle $C$, a block of $H$ is said to be \textit{dilated} if its length in $H$ is strictly  greater than that of  its corresponding one in $C$. Otherwise, it is called a \textit{non-dilated} block. For example, $C(3,1,2,1)$ is a subdivision of the four-blocks cycle $C(1,1,1,1)$ with exactly two non-dilated blocks. 
 
\begin{proposition}
Let $k_1, k_2, ...,k_{2m}$ be positive integers with $m \geq 2$. If $k_i+k_{i+1} \geq 3$ for all $i \in \{1, 3, ..., 2m-1\}$, then every tournament $T$ of order $m+\sum_{i=1}^{2m} k_i$ contains a subdivision of $C(k_1,k_2,...,k_{2m})$, with at least $m$ non-dilated blocks. 
\end{proposition}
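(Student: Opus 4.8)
The plan is to exhibit the desired subdivision explicitly by working along a median order of $T$, exploiting the standard fact that consecutive vertices of such an order are always joined by a forward arc; thus a block read off as a consecutive run is automatically a directed path of the prescribed length, and hence non-dilated. This is the engine that produces the $m$ exact blocks demanded by the statement.

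First I would fix a median order $v_1, v_2, \ldots, v_N$ of $T$ with $N = m + \sum_{i=1}^{2m} k_i$, and keep in hand its two standard features: (i) $v_i \to v_{i+1}$ for every $i$, so that $v_i \to v_{i+1} \to \cdots \to v_{i+\ell}$ is a directed path of length exactly $\ell$; and (ii) the local median property, namely that the first vertex of any interval dominates at least half of the interval while the last vertex is dominated by at least half of it, which supplies the backward arcs needed to turn the corners. Viewing the oriented cycle $C(k_1,\ldots,k_{2m})$ as a cyclic chain of $m$ \emph{valleys}, each valley being a two-blocks path $B_{2j-1}\cup B_{2j}$ pointing into a common sink and glued to its neighbours at source corners, I would read the $m$ odd blocks $B_1, B_3, \ldots, B_{2m-1}$ off as $m$ pairwise disjoint consecutive runs placed in increasing order along the median order, each of exact length $k_{2j-1}$. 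By (i) these are directed paths, and being of the target length they are precisely the $m$ non-dilated blocks required; the left end of the $j$-th run is the source corner $c_{2j-2}$ and its right end is the sink corner $c_{2j-1}$.

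Next I would account for the remaining $\sum_{j} k_{2j}$ vertices by distributing them into the gaps between consecutive runs and into the wrap-around region past the last run, arranging that the gap following the $j$-th run carries at least $k_{2j}-1$ vertices, so that a global count leaves exactly $m$ spare vertices. Each even block $B_{2j}$ is then realised as a directed path from the source corner $c_{2j}$, the left end of the $(j+1)$-st run, back to the sink corner $c_{2j-1}$, the right end of the $j$-th run, of length $k_{2j}$, its interior routed through the gap vertices; here the local median property (ii) is what guarantees the backward arcs assembling these connecting dipaths, and the hypothesis $k_i+k_{i+1}\ge 3$ for odd $i$ rules out the degenerate valley in which both blocks have length $1$, and hence the fully antidirected cycle, exactly in the spirit of the directed-cycle exception in Rosenfeld's conjecture. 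When a connector cannot be closed at exactly length $k_{2j}$, I would spend one of the $m$ spare vertices to dilate it by a single arc, keeping the total number of vertices at most $N$.

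The main obstacle will be precisely this construction of the connecting blocks at controlled length, together with the cyclic closure: the block $B_{2m}$ must run from the leftmost source corner $c_0$ to the rightmost sink corner $c_{2m-1}$ while avoiding the interiors of all $m$ runs, and it is the only block that cannot sit inside a single short gap. I expect to handle it by reserving the wrap-around region for its interior and invoking the local median property over the whole order to secure the required arcs, while controlling every connector so that the total dilation never exceeds the $m$ available spare vertices. Organising the whole argument as an induction on $m$, inserting one valley at a time into the cyclic chain with $m=2$ as the base case, is a natural way to keep this length-and-disjointness bookkeeping under control, with the single-valley insertion furnishing the inductive step.
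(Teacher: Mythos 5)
Your plan has a genuine gap at exactly the point you yourself flag as ``the main obstacle'': the even (connecting) blocks are never constructed, and the layout you fix makes their construction impossible in general. You place the $m$ odd runs in increasing order along the median order and require each even block $B_{2j}$ to run from the left end of the $(j+1)$-st run \emph{back} to the right end of the $j$-th run; any such dipath must use at least one backward arc with respect to the order. Now take $T$ transitive: its median order is the transitive order and has \emph{no} backward arcs whatsoever, so under your layout not a single connector exists --- even though the proposition certainly holds for transitive tournaments (there the sink corners must sit to the right of both adjacent source corners, which your placement forbids). Independently of this, the local median property only gives that the feed vertex of an interval dominates half of that interval; it does not produce a dipath of prescribed length $k_{2j}$ (or $k_{2j}+1$) between two \emph{prescribed} endpoints avoiding prescribed vertices, and neither the spare-vertex bookkeeping nor the proposed induction on $m$ is actually carried out, so the cyclic closure through $B_{2m}$ remains entirely unproved. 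Your reading of the hypothesis $k_i+k_{i+1}\geq 3$ (excluding an antidirected degenerate valley) also does not correspond to any step of a working argument.

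The paper's proof sidesteps all of this and is much shorter: partition $T$ into $m$ vertex-disjoint subtournaments of orders $k_{2j-1}+k_{2j}+1$ for $j=1,\ldots,m$ (this uses up exactly the $m+\sum_{i=1}^{2m}k_i$ vertices). Since a tournament of order $k_{2j-1}+k_{2j}+1$ has chromatic number exceeding $k_{2j-1}+k_{2j}$, Theorem \ref{path} --- and this is precisely where $k_i+k_{i+1}\geq 3$ is needed, as it is the hypothesis of that theorem --- gives in each piece a two-blocks path $P(k_{2j-1},k_{2j})$, i.e.\ two dipaths $Q_{2j-1},Q_{2j}$ of \emph{exact} lengths $k_{2j-1},k_{2j}$ sharing only their initial vertex. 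The common initial vertices are the $m$ source corners for free, and each sink corner is created by the single tournament arc between the terminal vertices $x_{2j}$ and $x_{2j+1 \pmod{2m}}$ of consecutive pieces: whichever orientation that arc has, it extends exactly one of the two incident blocks by one arc and leaves the other exact. With $m$ junctions and $2m$ blocks, exactly $m$ blocks get dilated (each by a single arc) and $m$ remain non-dilated, which is the claim. If you want to salvage your approach, the clean repair is exactly this move: extract the valleys with exact block lengths from the two-blocks path theorem inside disjoint subtournaments, rather than trying to force both the exact runs and their connectors into one linear order.
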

\begin{proof} Let $T_1, T_3, ..., T_{2m-1}$ be subtournaments of $T$ of order  $k_1+k_{2}+1$, $k_3+k_{4}+1$, ...., $k_{2m-1}+k_{2m}+1$, respectively.  Due to Theorem \ref{path}, it follows that $T_i$ contains a two-blocks path $P(k_i,k_{i+1})$  for all $i \in \{1, 3, ..., 2m-1\}$, which is the  union of two disjoint directed paths $Q_{i}$ and $Q_{i+1}$ of respective length $k_{i}$ and $k_{i+1}$ which are disjoint except in their initial vertex.   We denote by $x_i$ the terminal vertex of $Q_{i}$ for all $ 1 \leq i \leq 2m$.  By considering the possible orientations of the edges $x_{2i} x_{2i+1  (mod \hspace{1mm } 2m)}$ for all $1 \leq i \leq m$, we get the desired subdivision.  \end{proof}

	\end{document}